\documentclass[12 pt]{article}%
\usepackage{amsmath, amsfonts, amsthm, color,latexsym, mathtools}
\usepackage{amsmath, ulem}
\usepackage{amsfonts}
\usepackage{amssymb}
\usepackage{color, soul}
\usepackage[all]{xy}
\usepackage{graphicx}%
\providecommand{\U}[1]{\protect\rule{.1in}{.1in}}
\allowdisplaybreaks[4]
\newtheorem{theorem}{Theorem}[section]
\newtheorem{proposition}[theorem]{Proposition}
\newtheorem{corollary}[theorem]{Corollary}
\newtheorem{example}[theorem]{Example}
\newtheorem{examples}[theorem]{Examples}

\newtheorem{lemma}[theorem]{Lemma}
\newtheorem{final remark}[theorem]{Final Remark}
\newtheorem{definition}[theorem]{Definition}
\allowdisplaybreaks[4]

\usepackage{wrapfig}
\usepackage{pgf,tikz}
\usetikzlibrary{decorations.pathreplacing,quotes,arrows,matrix,decorations.fractals,shapes.geometric,calc,positioning,intersections}
\usepackage{graphicx,float}

\usepackage[utf8]{inputenc}
\usepackage[english]{babel}
\usepackage{pifont}

\usepackage{lipsum}

\usepackage{amsmath,amsthm,amsfonts,mathtools, ulem}
\usepackage{amssymb,amstext,amscd}
\usepackage{latexsym,bm,mathrsfs,esint} 
\usepackage{extarrows}

\usepackage{url}
\usepackage[fixlanguage]{babelbib}

\usepackage[nottoc]{tocbibind}
\newcommand{\R}{\mathbb{R}}

\usepackage[inline]{enumitem}
\setlist[enumerate,1]{label=(\roman*)}

\begin{document}

\title{A unifying approach to closed subspaces of linear and multilinear operators }
\author{Geraldo Botelho\thanks{Supported by FAPEMIG grants RED-00133-21 and APQ-01853-23}~ and Ariel Mon\c c\~ao\thanks{Supported by a CAPES scholarship. \newline 2020 Mathematics Subject Classification: 46B42, 46B50,  47B65, 46G25, 47H60 47L22.\newline Keywords: Banach spaces, Banach lattices, sequential continuity, factorable multilinear operators.}}
\date{}
\maketitle

\begin{abstract} We prove several abstract results giving general conditions under which subspaces of linear or multilinear operators on Banach spaces or Banach lattices are closed. Each of these abstract results is followed by concrete applications, concerning classes of linear/multilinear operators already studied as well as new classes.
\end{abstract}

\section{Introduction}
\indent\indent Let $E, E_1, \ldots, E_m,F$ be Banach spaces, let ${\cal L}(E,F)$ be the Banach space of bounded linear operators from $E$ to $F$ endowed with its usual norm, and let ${\cal L}(E_1, \ldots, E_m;F)$ be the Banach space of continuous $m$-linear operators from $E_1 \times \cdots \times E_m$ to $F$ also endowed with its usual norm. Every analyst knows that closed subspaces of ${\cal L}(E,F)$ and of ${\cal L}(E_1, \ldots, E_m;F)$  play a key role in Banach space theory (and also in Banach lattice theory), specifically in the study of classes of linear/multilinear operators with special properties. Many of such analysts have already come across a proof that a  certain subspace of linear/multilinear operators is closed which is closely related to the proof that some other subspace is closed. Actually, it is not uncommon to see the same argument being used to prove closedness of several different subspaces. The idea of this paper is to avoid, or at least to reduce substantially, such unnecessary repetitions. To do so, we tried to figure out the common features that make a certain argument effective in the proof that subspaces of operators enjoying such features are closed. This effort resulted in several abstract results we prove along this manuscript.

Each abstract result proved in this manuscript is followed by concrete applications that either: recover subspaces that have been proved to be closed in the literature; or prove that a subspace that has been claimed to be closed is indeed closed; or prove that subspaces consisting of operators that have already been studied, but which closedness had not been investigated thus far, are closed; or, finally, prove that subspaces consisting of operators that have not been considered yet are closed.

The paper is organized as follows. In Section 2 we prove a quite general result on closed subspaces of linear operators from a normed space to a locally convex space defined, or characterized, by the transformation of certain subsets of the domain space onto relatively compact subsets of the target space. In Section 3 we prove two results: (i) Sets of multilinear operators whose linear adjoints belong to a closed subspace of linear operators are closed subspaces. (ii) Sets of multilinear operators between Banach lattices whose linearizations on the positive projective tensor product belong to a closed subspace of linear operators are closed with respect to the regular norm. In Section 4 we investigate subspaces of multilinear operators defined, or characterized, by the transformation of sequences with special properties into convergent sequences. Section 5 is devoted to subspaces of multilinear operators that enjoy a certain property separately, that is, the linear (or multilinear) operators obtained by fixing variables of the multilinear operator enjoy such property. In Section 6 we prove that subspaces of multilinear operators that factor through closed subspaces of linear operators and closed subspaces of multilinear operators are closed. In Section 7 we just state two further simple results.

Banach spaces are supposed to be real or complex, and Banach lattices are always supposed to be real. In particular, if we are referring to a linear operator from a Banach space to a Banach lattice (or vice-versa) or between Banach lattices, both of the spaces are supposed to be real. Unless explicitly stated otherwise, ${\cal L}(E,F)$ and ${\cal L}(E_1, \ldots, E_m;F)$ are endowed with the usual operator norm. If the underlying spaces are Banach lattices, by ${\cal L}_r(E,F)$ and ${\cal L}_r(E_1, \ldots, E_m;F)$ we denote the spaces of regular linear/multilinear operators. The symbol $B_E$ stands for the closed unit ball of $E$ and $E^{*}$ denotes its topological dual. By $J_E$ we mean the canonical isometric embedding from $E$ to its bidual $E^{**}$, which is also a Riesz homomorphism if $E$ is a Banach lattice.  The notion of bounded subset of a Banach spaces is canonical; but to avoid ambiguity with the notion of order boundedness, a subset of a Banach lattice that is bounded with respect to its norm shall be referred to as a norm bounded set. In the finite cartesian product of normed space we always consider the maximum norm $\|\cdot\|_\infty$, and, if the spaces are Banach lattices, we always consider the coordinatiwise ordering.

For Banach space theory we refer to \cite{Botelho, megginson}, for Banach lattice theory to \cite{Alip, Meyer-Peter}, for (spaces of) multilinear operators to \cite{Din, mujica}, and for (spaces of) regular multilinear operators on Banach lattices to \cite{Buskes3, loane}. Operators ideals, in particular closed operator ideals, are taken in the sense of \cite{df, Pietsch}, and ideals of multilinear operators, in particular closed ideals of multilinear operators (or closed multi-ideals) are taken in the sense of  \cite{fg, pietsch83}.

\section{Closed subspaces of linear operators}

\indent\indent Some well studied classes of linear operators are defined, or characterized, by the transformation of certain sets in the domain space onto relatively norm compact sets in the target space. For instance:\\
$\bullet$ Compact operators between Banach spaces send bounded sets onto relatively norm compact sets.\\
$\bullet$ Completely continuous operators (or Dunford-Pettis operators) between Banach spaces send weakly compact sets onto relatively norm compact sets.\\
$\bullet$ $AM$-compact operators from a Banach lattice to a Banach space send order intervals onto relatively norm compact sets \cite[Section 5.4]{Alip}.\\
$\bullet$ $DW$-compact operators from a Banach lattice to a Banach spaces send disjoint weakly compact sets onto relatively norm compact sets \cite{Jin} (for details, see Example \ref{exe-compact}(iii)).

The spaces of all such operators are closed in the space of bounded linear operators; furthermore, the proofs are all similar. Our first result gives a unified way to guarantee that spaces of operators defined, or characterized, in this way are always closed. We prove a quite general result in the setting of (real or complex) topological vector spaces. Some terminology is needed. The following concepts can be found in \cite{Bourbaki, Schaefer2}:\\
$\bullet$ A topological vector space $E$ is {\it quasi-complete} if every bounded closed subset of $E$ is complete with respect to the uniform structure induced by $E$ \cite[1.4, p.\,16]{Schaefer2}.\\ 
$\bullet$ A subset $A$ of a topoogical vector space $E$ is said to be:\\
\indent$\circ$ {\it Totally bounded} if for every neighborhood $V$ of the origin there exist finitely many vectors $a_1, \ldots, a_n \in A$ such that $A \subseteq \bigcup\limits_{i=1}^n (a_i + V)$.  \\
\indent$\circ$ {\it Relatively compact} if the closure $\overline{A}$ of  $A$ in $E$ is compact.

A topological vector space is {\it locally convex} if the origin has a neighborhood basis consisting of convex sets, or equivalently, if its topology is generated by a family of seminorms \cite[Corollary 8.3.8]{Botelho}. In what follows all locally convex spaces are supposed to be Hausdorff.

Let $E$ be a  normed space and let $F$ be a locally convex space whose topology is generated by a family of seminorms $\{p_i\colon i\in I\}$. The {\it topology of uniform convergence on bounded sets} on the space ${\cal L}(E,F)$ of continuous linear operators from $E$ to $F$ is the topology generated by the family of seminomrs $\{P_i : i \in I\}$, where, for each $i \in I$, $P_i(T)=\sup\{p_i(T(x))\colon \|x\|\leq1\}$ for every $T \in {\cal L}(E,F)$. When $F$ is also normed space, this topology coincides with the usual norm topology on ${\cal L}(E,F)$ \cite[p,\,81]{Schaefer2}. All we need to know about this topology is the following:

\begin{lemma}\label{on3k} If $W$ is a neighborhood of $0 \in F$, then the set $V:= \{S \in {\cal L}(E,F) : S(x) \in W \mbox{ for every } x \in B_E\}$ is a neighborhood of $0 \in {\cal L}(E,F)$ in the topology of uniform convergence on bounded sets.
\end{lemma}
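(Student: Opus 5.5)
Since the seminorms $\{p_i : i\in I\}$ generate the topology of $F$, the neighborhood $W$ of $0$ contains a basic neighborhood of the form
\[
U=\{y\in F : p_{i_1}(y)<\varepsilon,\ \ldots,\ p_{i_n}(y)<\varepsilon\}
\]
for some finitely many indices $i_1,\ldots,i_n\in I$ and some $\varepsilon>0$. The plan is to exhibit a basic neighborhood of $0$ in ${\cal L}(E,F)$, built from the seminorms $P_{i_1},\ldots,P_{i_n}$, that is contained in $V$. Being a superset of a neighborhood of $0$, $V$ will then itself be a neighborhood of $0$.

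Set
\[
\mathcal{U}=\{S\in{\cal L}(E,F) : P_{i_k}(S)<\varepsilon \mbox{ for } k=1,\ldots,n\}.
\]
This is a basic open neighborhood of $0$ in the topology of uniform convergence on bounded sets, since that topology is generated by the seminorms $P_i$. Now let $S\in\mathcal{U}$ and $x\in B_E$. By the definition of $P_{i_k}$ as a supremum over the unit ball,
\[
p_{i_k}(S(x))\le P_{i_k}(S)<\varepsilon \quad\mbox{for every } k=1,\ldots,n.
\]
Hence $S(x)\in U\subseteq W$. As $x\in B_E$ was arbitrary, $S\in V$, and therefore $\mathcal{U}\subseteq V$.

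The one point that needs a little care is that each $P_i(S)$ is finite, so that the $P_i$ are genuine seminorms and $\mathcal{U}$ is well defined. This holds because $S$ is continuous: the set $S^{-1}(\{p_i<1\})$ is a neighborhood of $0$ in $E$, so it contains $\delta B_E$ for some $\delta>0$, and consequently $P_i(S)\le 1/\delta$. Beyond this check the argument is immediate, and I do not expect any real obstacle.
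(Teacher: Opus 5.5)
Your proof is correct and is essentially the paper's argument: you choose a basic seminorm neighborhood $\{y\in F : p_{i_k}(y)<\varepsilon,\ k=1,\ldots,n\}\subseteq W$ and show that the corresponding basic neighborhood $\{S : P_{i_k}(S)<\varepsilon,\ k=1,\ldots,n\}$ of $0$ in ${\cal L}(E,F)$ is contained in $V$. Your extra check that each $P_i(S)$ is finite, by continuity of $S$, is sound and is a point the paper leaves implicit.
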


\begin{proof} The set $W$ contains in a basic neighborhood of $0 \in F$, so there are $\varepsilon > 0$ and $i_1, \ldots, i_k \in I$ such that
\begin{equation}\label{o92n}\{y \in F : p_{i_j}(y) < \varepsilon,\, j = 1, \ldots, k\} \subseteq W. \end{equation}
For this same $\varepsilon$, the set
$$U:= \{S \in {\cal L}(E,F) : P_{i_j}(S) < \varepsilon,\, j = 1, \ldots, k\} $$
is a (basic) neighborhood of $0 \in {\cal L}(E;F)$. It is enough to show that $U \subseteq V$.  Given $S \in U$, for $j = 1, \ldots, k$, we have
$$ \sup_{x \in B_E} p_{i_j}(S(x))= P_{i_j}(S) < \varepsilon$$
By (\ref{o92n}) we obtain that $S(x) \in W$ for every $x \in B_E$, that is, $S \in V$.
\end{proof}

We say that a property $\mathfrak{P}$ of bounded subsets of a given normed space is a {\it cone property} if  $\lambda  C$ has $\mathfrak{P}$ whenever $\lambda > 0$ and $C$ has $\mathfrak{P}$.

\begin{theorem}\label{P-rel-comp} Let $E$ be a  normed space, let $F$ be a  quasi-complete locally convex space, and let $\mathfrak{P}$ be a cone property of bounded subsets of $E$. Then the set of operators that send subsets of $E$ having $\mathfrak{P}$ onto relatively compact subsets of $F$ is a closed subspace of ${\cal L}(E,F)$ in the topology of uniform convergence on bounded sets.
\end{theorem}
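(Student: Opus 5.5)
Denote by $\mathcal{K}_{\mathfrak{P}}(E,F)$ the set of operators $T \in {\cal L}(E,F)$ such that $T(C)$ is relatively compact in $F$ for every $C \subseteq E$ having $\mathfrak{P}$. The plan has two parts: a subspace check, and a closedness argument that passes through total boundedness, using quasi-completeness of $F$ to return to relative compactness.

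\textbf{Subspace.} The zero operator clearly belongs to $\mathcal{K}_{\mathfrak{P}}(E,F)$. For $S,T \in \mathcal{K}_{\mathfrak{P}}(E,F)$, a scalar $\lambda$ and $C$ having $\mathfrak{P}$, we have
$$(S+\lambda T)(C) \subseteq \overline{S(C)} + \lambda\,\overline{T(C)}.$$
The right-hand side is compact, being the image of the compact set $\overline{S(C)}\times\overline{T(C)}$ under the continuous map $(u,v)\mapsto u+\lambda v$. Since $F$ is Hausdorff, this image is closed, so $(S+\lambda T)(C)$ is relatively compact.

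\textbf{Closedness.} Let $T$ lie in the closure of $\mathcal{K}_{\mathfrak{P}}(E,F)$ and let $C$ have $\mathfrak{P}$.

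\begin{enumerate}
\item \emph{Reduce to total boundedness.} I use the standard fact for locally convex (indeed uniform) spaces that a set is relatively compact if and only if its closure is compact, and that a set is compact if and only if it is complete and totally bounded. Since $C$ is bounded and $T$ is continuous, $T(C)$ is bounded, and so is its closure. Because $F$ is quasi-complete, $\overline{T(C)}$ is complete. So it suffices to show that $T(C)$ is totally bounded, since the closure of a totally bounded set is totally bounded.

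\item \emph{Use the cone property to move $C$ into the unit ball.} Fix a neighborhood $V$ of $0$ in $F$, and choose a balanced neighborhood $W$ of $0$ with $W+W+W \subseteq V$. As $C$ is bounded, pick $\lambda>0$ with $\lambda C \subseteq B_E$. By the cone property, $\lambda C$ has $\mathfrak{P}$.

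\item \emph{Approximate $T$.} Lemma~\ref{on3k} applied to $\lambda W$, which is still a neighborhood of $0$, shows that $\{S : S(x)\in \lambda W \text{ for all } x\in B_E\}$ is a neighborhood of $0$ in ${\cal L}(E,F)$. Hence there is $S \in \mathcal{K}_{\mathfrak{P}}(E,F)$ with $(T-S)(x) \in \lambda W$ for all $x \in B_E$. For $c \in C$ we have $\lambda c \in B_E$, so $(T-S)(\lambda c)\in\lambda W$, that is, $(T-S)(c) \in W$. Equivalently, one could apply the approximation directly to $\lambda C$ and then rescale.

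\item \emph{Cover $T(C)$.} Since $S(C)$ is relatively compact, it is totally bounded. So there are $c_1,\dots,c_n \in C$ with $S(C) \subseteq \bigcup_i \bigl(S(c_i)+W\bigr)$. Then for $c\in C$, choosing $i$ with $S(c)-S(c_i)\in W$,
$$T(c) - T(c_i) = (T-S)(c) + \bigl(S(c)-S(c_i)\bigr) - (T-S)(c_i) \in W+W+W \subseteq V,$$
using that $W$ is balanced, so $-W=W$. Thus $T(C)\subseteq\bigcup_i\bigl(T(c_i)+V\bigr)$, and $T(C)$ is totally bounded.
\end{enumerate}

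\textbf{Main obstacle.} The only delicate point is step~1: the equivalence ``compact $\iff$ complete and totally bounded'' in the uniform structure of $F$, together with the fact that a relatively compact set is totally bounded. These are standard results from Bourbaki and Schaefer and can be quoted. The scaling in step~2 is the only place where the cone property is needed, and it is required because Lemma~\ref{on3k} controls $T-S$ only on $B_E$.
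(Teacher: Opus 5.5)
Your proof is correct and follows essentially the paper's route. You approximate $T$ uniformly on $B_E$ by an $S$ from the set via Lemma~\ref{on3k}, transfer total boundedness from $S(C)$ to $T(C)$ using $W+W+W\subseteq V$, and return to relative compactness through quasi-completeness. Your subspace step via compactness of $\overline{S(C)}+\lambda\overline{T(C)}$ is a harmless variant that needs only Hausdorffness.

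One remark: you rescale the neighborhood to $\lambda W$ and then apply total boundedness to $S(C)$ itself, so you never use that $\lambda C$ has $\mathfrak{P}$. Contrary to your closing comment, your argument does not need the cone hypothesis at all.
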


\begin{proof}  In this proof we are always considering, on ${\cal L}(E,F)$, the topology of uniform convergence on bounded sets. 
We have to prove that
$$M:= \{ T \in {\cal L}(E,F) : T(C) \mbox{ is relatively compact  whenever } C \mbox{ has } \mathfrak{P}\}$$ is a closed subspace of ${\cal L}(E,F)$. The fact that $M$ is a linear subspace follows immediately from the following two facts:\\
$\bullet$ A subset of a quasi-complete Hausdorff topological vector space is totally bounded if and only if it is relatively compact \cite[pp.\,25,27]{Schaefer2}.\\
$\bullet$ If $A$ and $B$ are totally bounded subsets of a topological vector space  and $\lambda$ is a scalar, then $A + \lambda B$ is totally bounded as well \cite[5.1 p.\,25]{Schaefer2}.

Let $T \in \overline{M}$ and let $C \subseteq E$ with property $\mathfrak{P}$ be given. The assumption that $\mathfrak{P}$ is a cone property allows us to assume, without loss of generality, that $C \subseteq B_E$. For an arbitrary neighborhood $V$ of $0 \in F$, by \cite[Proposition 8.1.7]{Botelho} there is a balanced neighborhood of the origin $W_1$ such that $W_1 + W_1 \subseteq V$. By the same result, there exists a further neighborhood of the origin $W$ such that $W + W \subseteq W_1$. As $0 \in W$, $W \subseteq W_1$, so $W + W + W \subseteq W_1 + W_1 \subseteq V$. 
By Lemma \ref{on3k} the set
$$Z:=\{S \in {\cal L}(E,F) : S(x) \in W \mbox{ for every } x \in B_E\}$$ is a neighborhood of $0 \in {\cal L}(E,F)$. Using that translations are homeomorphisms on topological vector spaces \cite[Proposition 8.1.3]{Botelho}, it follows that $T+Z$ is a neighborhood of $T$ in ${\cal L}(E,F)$. Then $(T+Z)\cap M \neq \emptyset$ because $T \in \overline{M}$, say $S \in (T+Z) \cap M$, that is, $S \in M$ and there is $S_1 \in Z$ such that $S = T + S_1$. Therefore, $T(x)-S(x) =S_1(x) \in W$ for every $x \in B_E$. In particular, $T(x)-S(x) \in W$ for every $x \in C$. Since $S \in M$, the set $S(C)$ is totally bounded, hence there are finitely many vectors $b_1, \ldots, b_n \in S(C)$ such that $ S(C) \subseteq \bigcup\limits_{i=1}^n (b_i + W)$. Using that $W$ is balanced and $w\coloneqq T(x) - S(x) \in W$, we get $S(x)=T(x)+(-w)\in (T(x)+W)$ and $T(x)=S(x)+w\in (S(x)+W)$ for every $ x \in C$. Then, $S(C) \subseteq T(C) + W$, from which we have $b_i \in a_i + W$ for some $a_i \in T(C)$, $i=1, \ldots, n$; and $T(C) \subseteq S(C) + W$. Therefore,
\begin{align*}
T(C) &\subseteq S(C) + W \subseteq \left( \bigcup_{i=1}^n (b_i + W) \right) + W \subseteq \bigcup_{i=1}^n (b_i + W + W) \\
&\subseteq \bigcup_{i=1}^n (a_i + W + W + W) \subseteq \bigcup_{i=1}^n (a_i + V).
\end{align*}
Since the neighborhood $V$ of $0$ in $F$ is arbitrary, this proves that $T(C)$ is totally bounded, hence it is relatively compact because $F$ is quasi-complete. This shows that $T \in M$ and completes the proof.
\end{proof}

\begin{examples}\label{exe-compact}\rm All properties of bounded sets we shall consider in the examples below are cone properties. The (easy) proofs are omitted.

\medskip

\noindent(i) It is obvious that Theorem \ref{P-rel-comp} recovers the fact that subspaces of compact operators and of completely continuous operators between Banach spaces are closed.

\medskip

\noindent(ii) The fact that being an order interval in a Banach lattice is a cone property assures that Theorem \ref{P-rel-comp} recovers the fact that the subspace of $AM$-compact operators from a Banach lattice $E$ to a Banach space $F$ (see the definition at the beginning of this section) is a closed subspace of ${\cal L}(E,F)$.

\medskip

\noindent(iii) According to \cite{Jin}, a norm bounded subset $A$ of a Banach lattice $E$ is {\it disjoint weakly compact} if every disjoint sequence $(x_n)_n$ in ${\rm sol}(A)\coloneqq\{x\in E\colon |x|\leq |y|\text{ for some }y\in A\}$ is weakly null. Recently, in \cite{Jin} the authors defined a linear operator from a Banach lattice to a Banach space to be {\it $DW$-compact} if it sends disjoint weakly compact sets to relatively norm compact sets. Since being a disjoint weakly compact set is a cone property, the subspace of all $DW$-compact operators from a Banach lattice $E$ to a Banach space $F$ is a closed subspace of ${\cal L}(E,F)$ by Theorem \ref{P-rel-comp}. This fact is neither proved nor mentioned in \cite{Jin}.

\medskip

\noindent(iv) It is rather canonical to say that a bounded subset $A$ of a Banach space is a {\it Dunford-Pettis set} if $\sup\limits_{x\in A}|x^*_n(x)|\longrightarrow0$ for every weakly null sequence $(x^*_n)_n$ in $E^*$ (see, e.g., \cite{lewis}). For $1\leq p\leq\infty$, according to \cite{Alikhani} a bounded subset $A$ of a Banach space $E$ is {\it weakly $p$-compact} if every sequence $(x_n)_n$ in $A$ admits a subsequence $(x_{n_k})_k$ such that $(x^*(x_{n_k}-x))_k\in\ell_p$ for some $x \in A$ and every functional $x^*\in E^*$.

 In \cite{Alikhani} the author defined an operator $T$ between Banach spaces to be {\it pseudo weakly compapct of order $p$}, $1\leq p\leq\infty$, if $T$ sends Dunford-Pettis weakly $p$-compact sets to relatively compact sets. In \cite{Alikhani} it is mentioned, without proof, that the subspace consisting of such operators is closed. Since being a Dunford-Pettis weakly $p$-compact set is a cone property, this fact follows from Theorem \ref{P-rel-comp}.

\medskip

\noindent(v) Let us see three examples that illustrate perfectly the philosophy of this paper, in the sense that the results we prove are intended to avoid unnecessary repetitions of the same argument. First we recall that a norm bounded subset of a Banach lattice is {\it $L$-weakly compact} if every disjoint sequence $(x_n)_n$ in ${\rm sol}(A)$ is norm null (see, e.g., \cite[Defintion 3.6.1]{Meyer-Peter}). According to \cite[Definition 1]{Alpay}, a norm bounded subset of a Banach lattice $E$ is {\it $b$-order bounded} if $A$ is order bounded in $E^{**}$. Of course, we are identifying a subset of $E$ with its range in $E^{**}$ under the canonical isometric isomorphism $J_E: E \longrightarrow E^{**}$. A linear operator $T$ from a Banach lattice to a Banach space is said to be:\\
$\bullet$ {\it $b$-AM-compact} if $T$ sends  $b$-order bounded sets to relatively compact sets \cite{Aqzzouz}.\\
$\bullet$ {\it $LW$-weakly compact} if $T$ sends  $L$-weakly compact sets to relatively compact sets \cite{Hajji}.\\
$\bullet$ {\it $b$-AM-Dunford-Pettis} if $T$ sends $b$-order bounded weakly compact sets to relatively compact sets \cite{Baklouti}.

In each of these three papers, the authors prove that the space consisting of the corresponding operators is closed. Also, the three proofs are quite similar. Being a $b$-order bounded set and being an $L$-weakly compact set are cone properties, so the closedness of these three spaces follows, simultaneously, from Theorem \ref{P-rel-comp}.
\end{examples}

\section{Closed subspaces of multilinear operators: first results}

\indent\indent In this section we present some general results that provide tools for those who wish to prove that a certain subspace of the space of continuous multilinear operators is closed. The range of applications is large, we restrict ourselves to multilinear operators that have already been investigated in the literature which corresponding subspaces have not been proved to be closed.

 Let $E_1, \ldots, E_m, F$ be normed spaces. According to the classical approach due to Aron and Schottenloher \cite{Aron}, 
 the (topological) adjoint of an $m$-linear operator $A \in \mathcal{L}(E_1,\dots,E_m;F)$ is the following bounded linear operator: 
\begin{equation}\label{lm2d} A^*\colon F^*\to \mathcal{L}(E_1,\dots,E_m;\mathbb{R})~,~ A^*(y^*)(x_1,\dots,x_m)=y^*(A(x_1,\dots,x_m)).\end{equation}

The next result follows immediately from the well known fact that the correspondence $A \in \mathcal{L}(E_1,\dots,E_m;F) \mapsto A^* \in {\cal L}(F^*, \mathcal{L}(E_1,\dots,E_m;\mathbb{K}))$ is an isometric isomorphism.

\begin{proposition}\label{fecho-adjunto}
Let $E_1,\dots,E_m$ be normed spaces and let $F$ be a Banach space. If $M$ is  a closed subset of $\mathcal{L}(F^*,\mathcal{L}(E_1,\dots,E_m;\mathbb{K}))$, then the set $M_*:=\{A\in\mathcal{L}(E_1,\dots,E_m;F):A^* \in M\}$ is a closed subset of $\mathcal{L}(E_1,\dots,E_m;F)$. In particular, if $M$ is a closed subspace, then $ M_*$ is a closed subspace as well.
\end{proposition}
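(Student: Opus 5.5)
The plan is to realize $M_*$ as the preimage of $M$ under a continuous map. Consider
$$\Psi\colon \mathcal{L}(E_1,\dots,E_m;F)\longrightarrow \mathcal{L}(F^*,\mathcal{L}(E_1,\dots,E_m;\mathbb{K}))~,~\Psi(A)=A^*,$$
with $A^*$ as in (\ref{lm2d}). Then $M_* = \Psi^{-1}(M)$ by definition, so closedness of $M_*$ follows once $\Psi$ is shown to be continuous. In the subspace case, linearity of $\Psi$ also makes $\Psi^{-1}(M)$ a linear subspace.

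The steps, in order, are as follows.

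First, I check that $A^*$ is well defined and bounded. For $y^*\in F^*$, the map $y^*\circ A$ is $m$-linear with $\|y^*\circ A\|\le \|y^*\|\,\|A\|$. Hence $A^*$ is linear from $F^*$ into $\mathcal{L}(E_1,\dots,E_m;\mathbb{K})$, with $\|A^*\|\le\|A\|$.

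Second, I check that $\Psi$ is linear. This is immediate from $(A+\lambda B)^*(y^*)=y^*\circ A+\lambda\, y^*\circ B$.

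Third, I prove the reverse inequality $\|A^*\|\ge\|A\|$, which gives that $\Psi$ is an isometry. By Hahn--Banach,
$$\|A(x_1,\dots,x_m)\|=\sup_{y^*\in B_{F^*}}|y^*(A(x_1,\dots,x_m))|=\sup_{y^*\in B_{F^*}}|A^*(y^*)(x_1,\dots,x_m)|.$$
Taking the supremum over $x_j\in B_{E_j}$ and interchanging the suprema yields $\|A\|=\|A^*\|$.

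For continuity alone, the first step would already suffice, since $\|\Psi(A)-\Psi(B)\|\le\|A-B\|$. Then $M_*=\Psi^{-1}(M)$ is closed as the preimage of a closed set under a continuous map. If $M$ is a subspace, $\Psi^{-1}(M)$ is a subspace because $\Psi$ is linear.

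There is no genuine obstacle here. The only point needing a little care is the interchange of suprema in the isometry step, and even that is not needed for closedness. Completeness of $F$ plays no role in this argument; it only matters if one wants surjectivity-type statements, which are not used.
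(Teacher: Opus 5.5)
Your proof is correct and is essentially the paper's argument: the paper simply invokes that $A\mapsto A^*$ is a linear isometry from $\mathcal{L}(E_1,\dots,E_m;F)$ into $\mathcal{L}(F^*,\mathcal{L}(E_1,\dots,E_m;\mathbb{K}))$, so $M_*$ is the preimage of $M$ under a continuous linear map. Your remarks are also accurate: only linearity and the estimate $\|A^*\|\le\|A\|$ are needed, and completeness of $F$ plays no role.
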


\begin{examples}\rm \label{exe-L-M} (i) An $m$-linear operator $A \in \mathcal{L}(E_1,\dots,E_m;F)$ is {\it compact} ({\it weakly compact}) if $A$ sends bounded subsets of $E_1\times\cdots \times E_m$ to compact (weakly compact) subsets of $F$. Using that spaces of compact and of weakly compact linear operators are closed (for the compact case see Example \ref{exe-compact}(i), for the weakly compact case see \cite[Corollary 3.4.10]{megginson}), and that $A$ is compact (weakly compact) if and only if $A^*$ is compact (weakly compact) \cite{Aron, mu, ry},  Proposition \ref{fecho-adjunto} recovers the facts that spaces of compact and of weakly compact multilinear operators are closed.

\medskip

\noindent(ii) Let $E_1,\dots,E_m$ be Banach spaces and let $F$ be a Banach lattice. According to \cite[Definition 4.1]{Ariel}, a continuous $m$-linear operator $A\colon E_1\times\cdots\times E_m\to F$ is {\it $L$-weakly compact} if $A(B_{E_1 \times \cdots \times E_m})$  is an $L$-weakly compact subset of $F$ (cf. Example \ref{exe-compact}(v)). Recall that a linear operator from a  Banach lattice to a Banach space is {\it $M$-weakly compact} if it sends norm bounded disjoint sequences to norm null sequences \cite[Definition 5.59]{Alip}. In \cite[Theorem 4.5]{Ariel} it is proved that an operator $A \in {\cal L}(E_1, \ldots, E_m;F)$ is $L$-weakly compact if and only if its adjoint $A^*$ is an $M$-weakly compact linear operator. Since the subspace of $M$-weakly compact operators is closed in $\mathcal{L}(F^*,\mathcal{L}(E_1,\dots,E_m;\mathbb{K}))$ \cite[Theorem 5.65]{Alip}, the subspace of $m$-linear $L$-weakly compact operators is closed in ${\cal L}(E_1, \ldots, E_n;F)$ by Proposition \ref{fecho-adjunto}. This fact was neither proved nor mentioned in \cite{Ariel}.
\end{examples}

For the next results we recall the positive projective tensor product of Banach lattices. We start by calling $E_1\overline{\otimes}\cdots\overline{\otimes}E_m$ the Fremlin tensor product of the Archimedean Riesz spaces $E_1, \ldots, E_m$ \cite{Buskes3, Fremlin2,Fremlin1}. If $E_1, \ldots, E_m$ are Banach lattices, then the functional
$$x \in E_1\overline{\otimes}\cdots\overline{\otimes}E_m \mapsto \|x\|_{|\pi|}=\inf \left\{ \sum_{j=1}^k \|x_j^1\| \cdots \|x_j^m\| : x_j^i\in {E_i}^+,\, |x| \leq \sum_{j=1}^k x_j^1 \otimes \cdots \otimes x_j^m \right\}
$$
defines a lattice norm on $E_1 \overline{\otimes} \cdots \overline{\otimes} E_m$  \cite{Fremlin1}.  The completion of $E_1 \overline{\otimes} \cdots \overline{\otimes} E_m$ with respect to the norm $\|\cdot\|_{|\pi|}$, which is a Banach lattice \cite[Theorem 4.2]{Alip} containing $E_1 \otimes \cdots \otimes E_m$ as a dense subspace \cite[p.\,850, (g)]{Buskes3}, is denoted by  $E_1{\hat{\otimes}}_{|\pi|}\cdots{\hat{\otimes}}_{|\pi|} E_m$ and called the {\it positive projective tensor product of} $E_1, \ldots, E_m$. By
$$\otimes\colon E_1\times\cdots\times E_m\to E_1{\hat{\otimes}}_{|\pi|}\cdots{\hat{\otimes}}_{|\pi|} E_m~,~\otimes(x_1,\dots,x_m)=x_1\otimes\cdots\otimes x_m,$$
we denote the canonical norm one Riesz $m$-morphism.

Henceforth in this section, $E, E_1, \ldots, E_m,F$ are Banach lattices with $F$ Dedekind complete. For every operator $A \in \mathcal{L}_r(E_1,\dots,E_m;F)$ there exists a unique $A^\otimes \in \mathcal{L}_{r}(E_1{\hat{\otimes}}_{|\pi|}\cdots{\hat{\otimes}}_{|\pi|} E_m;F)$ such that $A = A^\otimes \circ \otimes$. Moreover, the correspondence
\begin{equation}A \in \mathcal{L}_r(E_1,\dots,E_m;F) \mapsto A^\otimes \in \mathcal{L}_{r}(E_1{\hat{\otimes}}_{|\pi|}\cdots{\hat{\otimes}}_{|\pi|} E_m;F) \label{km22}
\end{equation}
is an isometric isomorphism, where both of the spaces are endowed with their respective regular norms, and a Riesz isomorphism \cite[Proposition 3.3]{Buskes3}. The next result follows immediately from this isometric Riesz isomorphism.

\begin{proposition} If $H$ is a subspace of $\mathcal{L}_r(E_1,\dots,E_m;F)$ such that $\{A^\otimes : A \in H \}$ is a closed subspace of $\mathcal{L}(E_1{\hat{\otimes}}_{|\pi|}\cdots{\hat{\otimes}}_{|\pi|} E_m,F)$ with the regular norm, then $H$ is closed in $\mathcal{L}_r(E_1,\dots,E_m;F)$ with the regular norm.
\end{proposition}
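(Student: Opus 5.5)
The plan is to transport closedness along the isometric Riesz isomorphism (\ref{km22}), exactly as Proposition \ref{fecho-adjunto} was obtained from the isometry $A \mapsto A^*$. Write $\Phi\colon \mathcal{L}_r(E_1,\dots,E_m;F) \to \mathcal{L}_r(E_1{\hat{\otimes}}_{|\pi|}\cdots{\hat{\otimes}}_{|\pi|} E_m;F)$, $\Phi(A)=A^\otimes$. By \cite[Proposition 3.3]{Buskes3}, $\Phi$ is a linear bijection, and it is an isometry when both spaces carry their regular norms. In particular $\Phi$ is a homeomorphism for the regular-norm topologies. The hypothesis says $\Phi(H)$ is a closed subspace of $\mathcal{L}_r(E_1{\hat{\otimes}}_{|\pi|}\cdots{\hat{\otimes}}_{|\pi|} E_m;F)$ with the regular norm. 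Then $H=\Phi^{-1}(\Phi(H))$ is the preimage of a closed set under a continuous map, hence closed.

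For a sequential version, I would take $(A_n)_n\subseteq H$ with $\|A_n-A\|_r\to 0$ for some $A\in\mathcal{L}_r(E_1,\dots,E_m;F)$. Linearity and isometry give
$$\|A_n^\otimes-A^\otimes\|_r=\|(A_n-A)^\otimes\|_r=\|A_n-A\|_r\longrightarrow 0.$$
Hence $A^\otimes$ lies in the closure of $\{A_n^\otimes\}\subseteq\Phi(H)$, which equals $\Phi(H)$. So $A^\otimes=B^\otimes$ for some $B\in H$, and injectivity of $\Phi$ gives $A=B\in H$.

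The only point needing care, and the main obstacle, is interpreting the phrase ``closed subspace of $\mathcal{L}(E_1{\hat{\otimes}}_{|\pi|}\cdots{\hat{\otimes}}_{|\pi|} E_m,F)$ with the regular norm''. It must mean closed inside the space of regular operators $\mathcal{L}_r(\cdots;F)$ with the regular norm, since the regular norm is defined only there. With that reading, and with $F$ Dedekind complete so that $\mathcal{L}_r$ is a Banach lattice under the regular norm and (\ref{km22}) applies, the argument above is complete. No completeness argument is needed, because closedness is preserved under isometric isomorphisms onto the whole space.
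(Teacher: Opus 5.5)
Your proposal is correct and is essentially the paper's argument: the paper simply notes that the statement follows immediately from the isometric (Riesz) isomorphism $A \mapsto A^\otimes$ in (\ref{km22}) between the spaces with their regular norms, which is exactly your preimage-under-an-isometry argument. Your reading of ``closed in $\mathcal{L}(\cdots)$ with the regular norm'' as ``closed in $\mathcal{L}_r(\cdots;F)$ with the regular norm'' is the intended one.
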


\begin{example}\label{pm3w}\rm As we have already remarked, the subspace of continuous $M$-weakly compact linear operators is closed in $\mathcal{L}(E_1{\hat{\otimes}}_{|\pi|}\cdots{\hat{\otimes}}_{|\pi|} E_m,F)$. Using that $\|\cdot\| \leq \|\cdot\|_r$, the space of regular $M$-weakly compact linear operators is closed in $\mathcal{L}_r(E_1{\hat{\otimes}}_{|\pi|}\cdots{\hat{\otimes}}_{|\pi|} E_m,F)$  with the regular norm. From (\ref{km22}) and the proposition above it follows that the space of regular $m$-linear operators $A$ such that $A^\otimes$ is $M$-weakly compact is closed in $\mathcal{L}_r(E_1{\hat{\otimes}}_{|\pi|}\cdots{\hat{\otimes}}_{|\pi|} E_m,F)$ with the regular norm. Such operators were used in \cite[Theorem 5.9]{Ariel} to characterize multilinear operators whose adjoints are $L$-weakly compact.
\end{example}

 The order adjoint of an operator $A\in \mathcal{L}_r(E_1,\dots,E_m;F)$, defined as in (\ref{lm2d}), is regular and takes values in the space of regular multilinear forms, that is, $A^*\colon F^*\to \mathcal{L}_{r}(E_1,\dots,E_m;\mathbb{R})$ is a well defined linear operator 
 \cite[Proposition 2.1 and Theorem 2.3]{Ariel}. It is worth mentioning that there exist nor-order bounded (non-regular) linear operators $T$ such that $T^*$ is order bounded (regular), see \cite[p.\,289]{Alip}. Next lemma is an attempt to mitigate this shortcoming. Recall that: (i) If $T \colon E \to F$ is an weakly compact linear operator, then $T^{**}(E^{**})\subseteq J_F(F)$ (Gantmacher's Theorem). (ii) $J_F(F)$ is closed sublattice of $F^{**}$, in particular it is a Banach lattice.

\begin{lemma} \label{lema-fecho-adjunto-r} {\rm (a)} If $T\colon E \to F$ is an weakly compact operator and $T^{**} \colon E^{**} \to J_F(F)$ is regular, then so is $T$. In particular, if $F$ is reflexive and $T^{**}$ is regular, then so is $T$.\\
{\rm (b)} If $F$ is reflexive and $A\colon E_1\times\cdots\times E_m\to F$ is an $m$-linear operator such that $A^*$ is regular, then so is $A$.
\end{lemma}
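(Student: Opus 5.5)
For (a), I will write $T$ as a composition of $T^{**}$ with Riesz homomorphisms and an isomorphism of Banach lattices, and use that such compositions preserve regularity. By Gantmacher's Theorem, $T^{**}(E^{**})\subseteq J_F(F)$, so it makes sense to regard $T^{**}\colon E^{**}\to J_F(F)$. The identity $T^{**}\circ J_E=J_F\circ T$ then gives
\[
T=J_F^{-1}\circ T^{**}\circ J_E .
\]
Here $J_E\colon E\to E^{**}$ is a positive operator, in fact a Riesz homomorphism. The map $J_F^{-1}\colon J_F(F)\to F$ is a Riesz isomorphism, because $J_F$ is an injective Riesz homomorphism onto the closed sublattice $J_F(F)$, so it is positive with positive inverse.

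Suppose $T^{**}=R_1-R_2$ with $R_1,R_2\colon E^{**}\to J_F(F)$ positive. Then
\[
T=J_F^{-1}R_1J_E-J_F^{-1}R_2J_E
\]
is a difference of positive operators, so $T$ is regular. If $F$ is reflexive, every operator into $F$ is weakly compact and $J_F(F)=F^{**}$. The hypothesis "$T^{**}$ regular" then means exactly regular as a map into $J_F(F)$, and the special case follows. The one point needing care is that regularity of $T^{**}$ into $J_F(F)$, rather than into $F^{**}$, is what the statement assumes, so there is no issue with positive parts leaving the sublattice.

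For (b), the plan is to reduce to (a) through the linearization of $A$. I would first note that $A^*\colon F^*\to\mathcal{L}(E_1,\dots,E_m;\mathbb{R})$ regular forces $A^*$ to take values in $\mathcal{L}_r(E_1,\dots,E_m;\mathbb{R})$. Indeed, if $A^*=P_1-P_2$ with $P_1,P_2$ positive into the ordered space of forms, then for $y^*\ge0$ the form $A^*(y^*)=P_1(y^*)-P_2(y^*)$ is a difference of positive forms. Since $F^*$ is spanned by its positive cone, $A^*(y^*)$ is regular for every $y^*$.

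Next I would try the route through $\mathcal{L}_r(E_1,\dots,E_m;\mathbb{R})\cong(E_1\hat\otimes_{|\pi|}\cdots\hat\otimes_{|\pi|}E_m)^*$, the isometric Riesz isomorphism (\ref{km22}) with $F=\mathbb{R}$. Write $G=E_1\hat\otimes_{|\pi|}\cdots\hat\otimes_{|\pi|}E_m$. Then $A^*$ becomes a regular operator $S\colon F^*\to G^*$, and hence $S^*\colon G^{**}\to F^{**}=F$ is regular, since adjoints of regular operators are regular. Let $T\colon G\to F$ be the restriction $T=S^*\circ J_G$ (identifying $F^{**}$ with $F$). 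One checks on elementary tensors that
\[
y^*\bigl(T(x_1\otimes\cdots\otimes x_m)\bigr)=A^*(y^*)(x_1,\dots,x_m)=y^*\bigl(A(x_1,\dots,x_m)\bigr),
\]
so $T\circ\otimes=A$. The operator $T$ is regular as the composition of the Riesz homomorphism $J_G$ with the regular map $S^*$. Consequently $A=T\circ\otimes$ is regular, because $\otimes$ is a positive $m$-morphism and $T=T_1-T_2$ with $T_i\ge0$ gives $A=T_1\circ\otimes-T_2\circ\otimes$.

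The main obstacle is the identification of $A^*$ with an operator into $G^*$. This needs the duality $\mathcal{L}_r(E_1,\dots,E_m;\mathbb{R})\cong G^*$ as ordered spaces, which is (\ref{km22}) with $F=\mathbb{R}$. It also needs care that the norm on $\mathcal{L}_r$ is the regular norm, so that continuity holds. If boundedness becomes an issue, positivity of the pieces gives automatic continuity between Banach lattices. Equivalently, I could apply (a) to $T$ directly, with $T^{**}$ regular because it is the bi-adjoint of the regular $S^*\circ J_G$.
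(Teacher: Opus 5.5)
Your proof is correct and follows the paper's route. Part (a) is the same argument, $T=J_F^{-1}u_1J_E-J_F^{-1}u_2J_E$. In part (b), your $S^*=(I\circ A^*)^*$ is exactly the paper's $A^{**}\circ I^*$, and your $T=J_F^{-1}\circ S^*\circ J_G$ does what the paper obtains by applying (a) to $A^\otimes$.

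The differences are in your favor. You check that $A^*$ lands in regular forms, build the linearization from $S^*$, and verify $T\circ\otimes=A$ on elementary tensors. The paper instead writes $(A^\otimes)^{**}$ before knowing $A$ is regular, although a merely continuous $m$-linear map need not extend boundedly to $E_1\hat{\otimes}_{|\pi|}\cdots\hat{\otimes}_{|\pi|}E_m$.
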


\begin{proof} (a)  Write $T^{**}=u_1-u_2$, where $u_1, u_2 \in \mathcal{L}(E^{**},J_F(F))$ are positive operators. Since $J_F \colon F \to J_F(F)$ is a Riesz isomorphism, $J_F^{-1} \colon J_F(F) \to F$ is a positive operator \cite[Theorem 2.15]{Alip}. Then $T = (J_F^{-1} \circ u_1 \circ J_E) - (J_F^{-1} \circ u_2 \circ J_E)$ is the difference of two positive operators, that is, $T$ is regular. \\
(b) Write $A^*=T_1-T_2$, where $T_1, T_2 \in \mathcal{L}_r(F^*,\mathcal{L}_r(E_1,\dots,E_m;\mathbb{R}))$ are positive. Then the second adjoint $A^{**}=(A^*)^*=(T_1-T_2)^*=T_1^*-T_2^*$ is regular as well, that is, $A^{**}\in \mathcal{L}_r(\mathcal{L}_r(E_1,\dots,E_m;\mathbb{R})^*,F^{**})$. Taking $F = \R$ in (\ref{km22}) we know that spaces $\mathcal{L}_r(E_1,\dots,E_m;\R)$ and $(E_1{\hat{\otimes}}_{|\pi|}\cdots{\hat{\otimes}}_{|\pi|} E_m)^*$ are isomorphic as Banach lattices by means of the Riesz isomorphism 
$$I\colon \mathcal{L}_r(E_1,\dots,E_m;\R)\to(E_1{\hat{\otimes}}_{|\pi|}\cdots{\hat{\otimes}}_{|\pi|} E_m)^* ~,~ I(B)=B^\otimes.$$
From \cite[Theorem 2.15]{Alip}, both $I$ and $I^{-1}$ are positive, hence $I^*$ and $(I^{-1})^*=(I^*)^{-1}$ are positive as well. Calling on \cite[Theorem 2.15]{Alip} once again, $I^*$ is a Riesz isomorphism from $(E_1{\hat{\otimes}}_{|\pi|}\cdots{\hat{\otimes}}_{|\pi|} E_m)^{**}$ to $\mathcal{L}_r(E_1,\dots,E_m;\R)^*$. It follows that the correspondence 
\[W \in \mathcal{L}_r(\mathcal{L}_r(E_1,\dots,E_m;\R)^*,F^{**}) \mapsto  W \circ I^* \in \mathcal{L}_r((E_1{\hat{\otimes}}_{|\pi|}\cdots{\hat{\otimes}}_{|\pi|} E_m)^{**},F^{**})\] is also an isomorphism of Banach lattices. In particular, $A^{**}$ is identified, up to this correspondence, with the regular operator $A^{**}\circ I^*=(A^\otimes)^{**}$. Indeed, for $T\in (E_1{\hat{\otimes}}_{|\pi|}\cdots{\hat{\otimes}}_{|\pi|} E_m)^{**}$ and $y^*\in F^*$,  \[[A^{**}(I^*(T))](y^*)=(I^*(T))[A^*(y^*)]=T((A^*(y^*))^\otimes))=T((A^\otimes)^*(y^*)) =[(A^\otimes)^{**}(T)](y^*),\] where the second equality follows from $(I^*(T))(B)=T(B^\otimes)$ for every $B\in \mathcal{L}_r(E_1,\dots,E_m;\R)$, and the third follows from an argument that can be found in the proof of \cite[Theorem 4.5]{Ariel}. It follows that 
$(A^\otimes)^{**}$ is an $F^{**}$-valued regular linear operator, thus weakly compact because $F$ is reflexive. It follows from item (a) that $A^\otimes$ is regular as well. By (\ref{km22}) we conclude that $A$ é regular.
\end{proof}

\begin{proposition}\label{fecho-adjunto-r} Suppose that the Banach lattice $F$ is reflexive.
If $G$ is a closed subspace of $\mathcal{L}_r(F^*,\mathcal{L}_r(E_1,\dots,E_m;\R))$ with the regular norm, then the set of $m$-linear operators $A \in \mathcal{L}(E_1,\dots,E_m;F)$ such that $A^*$ belongs to $G$ is a closed subspace of 
 $\mathcal{L}_r(E_1,\dots,E_m;F)$ with the regular norm.
\end{proposition}

\begin{proof} Call $G_*$ the subspace we have to prove is closed. By Lemma \ref{lema-fecho-adjunto-r}(b) we have $G_* \subseteq \mathcal{L}_r(E_1,\dots,E_m;F)$. It is  clear that $G_*$ is a linear subspace. Let $(A_n)_n$ be a sequence in $G_*$ such that $A_n\longrightarrow A$ with respect to the regular norm. Since  $\mathcal{L}_r(E_1,\dots,E_m;F)$ is a Banach lattice \cite[Proposition 3.3]{Buskes3}, $A$ is regular. 
Using that $\|B\| = \|B^*\|$,  $|B^*| \leq |B|^*$ for every regular multilinear operator $B$, and that the regular norm is a lattice norm, we have
\begin{align*}\|A_n^* - A^*\|_r &= \|(A_n - A)^* \|_r=  \||(A_n - A)^*| \|_r\leq \| |A_n - A|^*\|_r = \| |A_n - A|^*\|\\
&=  \| |A_n - A| \| = \|A_n - A\|_r \longrightarrow 0,
\end{align*}
proving that 
$A_n^*\longrightarrow A^*$ with respect to the regular norm. By assumption, $G$ is closed in the regular norm and $A_n^* \in G$ for every $n$, so $A^*\in G$, that is, $A \in G_*$.
\end{proof}

\begin{example}\label{exe-M-L}\rm Calling on \cite[Theorem 5.65]{Alip} and using that $\|\cdot\|\leq\|\cdot\|_r$ like we did in Example \ref{pm3w}, it follows that the subspace of all regular $L$-weakly compact linear operators from $F^*$ to $\mathcal{L}_r(E_1,\dots,E_m;\R)$ is closed in $\mathcal{L}_r(F^*,\mathcal{L}_r(E_1,\dots,E_m;\R))$ with the regular norm. Then, assuming that $F$ is reflexive, by Proposition \ref{fecho-adjunto-r} it follows that the subspace of regular $m$-linear operators whose adjoints are $L$-weakly compact is closed in $\mathcal{L}_r(E_1,\dots,E_m;F)$ with the regular norm. These multilinear operators were applied in \cite[Theorem 5.9]{Ariel}. It is worth recalling that weakly compact operators that fail to be $L$-weakly compact do exist \cite[p.\,322]{Alip}.
\end{example}

\section{Sequential continuity}

\indent\indent Although stated for spaces of multilinear operators, the results of this section also apply to linear opertors, that is, the linear case $m = 1$ of these results provides closed subspaces of linear operators. The first result shows that subspaces of multilinear (or linear) operators that send bounded sequences to norm null sequences are {\it always} closed.

\begin{proposition}\label{fechados-seq-prop} Let $E_1, \ldots, E_m,F$ be Banach spaces and let $\mathfrak{P}$ be a property of bounded sequences in $E_1\times\cdots\times E_m$. Then the set of continuous $m$-linear operators $A \colon E_1 \times \cdots \times E_m \to F$ that send bounded sequences having property $\mathfrak{P}$ to norm null sequences is a closed subspace of ${\cal L}(E_1, \ldots, E_n;F)$.
\end{proposition}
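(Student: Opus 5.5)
Call $M$ the set in the statement. The plan is to show first that $M$ is a linear subspace and then that it is closed, using an $\varepsilon/2$ argument that exploits the uniform bound on the sequences.

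\emph{Linearity.} Take $A,B\in M$, a scalar $\lambda$, and a bounded sequence $((x_1^n,\dots,x_m^n))_n$ in $E_1\times\cdots\times E_m$ with $\mathfrak{P}$. Then
$$(A+\lambda B)(x_1^n,\dots,x_m^n)=A(x_1^n,\dots,x_m^n)+\lambda B(x_1^n,\dots,x_m^n)\longrightarrow 0.$$
No assumption on $\mathfrak{P}$ is needed here, not even a cone property, because the sequence itself is never changed.

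\emph{Closedness.} Let $A\in\overline{M}$ and choose $A_k\in M$ with $\|A_k-A\|\to0$. Fix a bounded sequence $((x_1^n,\dots,x_m^n))_n$ with $\mathfrak{P}$, and let $K>0$ satisfy $\|x_i^n\|\le K$ for all $i$ and $n$; with the maximum norm this is just boundedness. Continuity of $m$-linear operators gives
$$\|A(x_1^n,\dots,x_m^n)\|\le \|(A-A_k)(x_1^n,\dots,x_m^n)\|+\|A_k(x_1^n,\dots,x_m^n)\|\le \|A-A_k\|K^m+\|A_k(x_1^n,\dots,x_m^n)\|.$$
Given $\varepsilon>0$, first fix $k$ with $\|A-A_k\|K^m<\varepsilon/2$. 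Since $A_k\in M$, there is $n_0$ such that the second term is below $\varepsilon/2$ for all $n\ge n_0$. Hence $A(x_1^n,\dots,x_m^n)\to0$, so $A\in M$.

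There is no real obstacle. The only point needing care is that $K^m$ is a uniform constant for the whole sequence, so $k$ can be chosen before $n$. Note also that $\mathfrak{P}$ is used only through the hypothesis on $A_k$, and completeness of the spaces is not needed.
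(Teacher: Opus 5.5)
Your proposal is correct and is essentially the paper's argument: linearity is immediate, and closedness follows by first choosing an approximant $A_k$ with $\|A-A_k\|K^m<\varepsilon/2$, which works because the sequence is uniformly bounded, and then using $A_k\in M$ to make the remaining term small for large $n$. The paper writes the sequence as lying in $C\cdot B_{E_1\times\cdots\times E_m}$ and uses $C^m$ where you use $K^m$, but otherwise the two proofs coincide.
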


\begin{proof} Let us call $M$ the set we shall prove to be a closed subspace of  ${\cal L}(E_1, \ldots, E_n;F)$. It is plain that $M$ is a linear subspace. Let a sequence $(A_n)_n$ in $M$ such that $A_n \longrightarrow A \in {\cal L}(E_1, \ldots, E_m;F)$ and a sequence $(x^k)_k=((x_1^k,\dots,x_m^k))_k$ in $C\!\cdot\!B_{E_1\times\cdots\times E_m}$ with property $\mathfrak{P}$, where $C>0$, be given. For $\varepsilon > 0$, take $n_0 \in \mathbb{N}$ such that $\|A_{n_0} - A\| < \frac{\varepsilon}{2C^m}$. As $A_{n_0} \in M$, there is $k_0 \in \mathbb{N}$ such that $\|A_{n_0}(x^k)\| \leq \frac{\varepsilon}{2}$ for every $k \geq k_o$. Then
$$\|A(x^k)\| \leq \|(A- A_{n_0})(x^k)\| + \|A_{n_0}(x^k)\| \leq \|A-A_{n_0}\|\!\cdot\!\|x_1^k\|\cdots \|x_m^k\| + \frac{\varepsilon}{2} < \varepsilon  $$
for every $k \geq k_0$. This proves that $A \in M$.
\end{proof}

\begin{examples}\label{im5h}\rm (i) According to \cite{Ribeiro}, a continuous $m$-linear operator $A \colon E_1 \times \cdots \times E_m \to F$ between Banach spaces is a {\it Dunford-Pettis operator} if, for all weakly null sequences $(x^n_i)_n$ in $E_i$, $i=1,\ldots, m$, it holds $A(x^n_1, \ldots, x^n_m) \longrightarrow 0$ in $F$. Using that weakly null sequences are norm bounded, the fact that the subspace of Dunford-Pettis operators is closed in ${\cal L}(E_1, \ldots, E_m;F)$, proved in \cite[Proposition 2.8]{Ribeiro}, is recovered by Proposition \ref{fechados-seq-prop}.

\medskip

\noindent(ii) Given Riesz spaces $E_1, \ldots, E_m$, we say that the vectors $(x_1,\dots,x_m)$, $(y_1,\dots,y_m)\in E_1\times\cdots\times E_m$ are:\\
$\bullet$ {\it Disjoint} if they are disjoint with respect to the usual coordinatewise ordering in $E_1\times\cdots\times E_m$.\\
$\bullet$ {\it Sporadically disjoint} if $x_i$ and $y_i$ are disjoint in $E_i$ for some $i\in\{1,\dots,m\}$ (see \cite{kusraeva}).

Let $E_1, \ldots, E_m$ be Banach lattices and let $F$ be a Banach space. According to \cite{Ariel}, an operator ${\cal L}(E_1, \ldots, E_n;F)$ is {\it $M$-weakly compact} ({\it sporadically $M$-weakly compact}) if $A$ sends norm bounded disjoint (norm bounded sporadically disjoint) sequences in $E_1\times\cdots\times E_m$ to norm null sequences in $F$. By Proposition \ref{fechados-seq-prop}, the subspaces of $M$-weakly compact operators and of sporadically $M$-weakly compact operators are closed in ${\cal L}(E_1, \ldots, E_n;F)$. These facts were neither proved nor mentioned in \cite{Ariel}.

\medskip

\noindent(iii) Now we introduce operators we believe have not been considered yet.  In \cite[Lemma 2.5]{Baklouti} it is proved that a linear operator $T \colon E \to F$ from a Banach lattice to a Banach space is $b$-AM-Dunford-Pettis (see the definition in Example \ref{exe-compact}(v)) if and only if $T$ sends $b$-order bounded weakly null sequences in $E$ to norm null sequences in $F$. Given Banach lattices $E_1, \ldots, E_m$ and a Banach space $F$, now it is a natural definition to say that an operator $A \in {\cal L}(E_1, \ldots, E_n;F)$ is {\it $b$-AM-Dunford-Pettis} if $A(x_1^n, \ldots, x_m^n) \longrightarrow 0$ in $F$ for all $b$-order bounded weakly null sequences $(x_i^n)_n$ in $E_i$, $i = 1,\ldots, m$. From Proposition \ref{fechados-seq-prop} it follows that the subspace of $b$-AM-Dunford-Pettis $m$-linear operators is a closed subspace of ${\cal L}(E_1, \ldots, E_n;F)$.
\end{examples}

On the one hand, for a linear operator $T \colon E \to F$ between Banach spaces and a vector topology $\tau$ em $E$, the following conditions are equivalent:

\noindent\noindent$\bullet$ $T(x_n) \longrightarrow 0$ in $F$ whenever $x_n \stackrel{\tau}{\longrightarrow} 0$ in $E$.\\
$\bullet$ $T(x_n) \longrightarrow T(x)$ in $F$ whenever $x_n \stackrel{\tau}{\longrightarrow}  x$ in $E$.

\medskip

On the other hand, these conditions are no longer equivalent for nonlinear operators. Due to this fact, Proposition \ref{fechados-seq-prop} does not recover the closed subspaces of some classes of multilinear operators that have proved to be useful in the literature. To remedy this situation, we shall prove a more general result that encompasses such classes.

Let $E_1,\dots,E_m$ be Banach spaces. We say that a sequence $((x_1^n,\dots,x_m^n))_n$ in $E_1 \times\cdots\times E_m$ is {\it $m$-bounded} if $\sup\limits_n \|x_1^n\|\!\cdots\!\|x_m^n\| < \infty$. Of course, bounded sequences are $m$-bounded.

We call $\mathfrak{R}$ the subset of $(E_1 \times\cdots\times E_m)^{\mathbb{N}}$ consisting of all $m$-bounded sequences. Let $\sim$ be a relation from $\mathfrak{R}$ to $E_1 \times\cdots\times E_m$, that is, $\sim$ is a subset of  $\mathfrak{R} \times (E_1 \times\cdots\times E_m)$. When the ordered pair $(((x_1^n, \dots,x_m^n))_n, (x_1,\dots,x_m))$ belongs to $\sim$, we write $((x_1^n, \dots,x_m^n))_n \sim (x_1,\dots,x_m)$.

Keeping the notation and the terminology we have just fixed, we have the following:

\begin{proposition}\label{fechados-seq-geral-prop}
 Let $E_1,\dots,E_m$ be Banach spaces. For every Banach space $F$, the set of  multilinear operators  
$A \in {\cal L}(E_1 ,\dots, E_m;F)$ such that $A(x_1^n, \dots,x_m^n) \longrightarrow A(x_1^n, \dots,x_m)$ in $F$ whenever the sequence $((x_1, \dots,x_m^n))_n$ is $m$-bounded and $((x_1^n, \dots,x_m^n))_n \sim (x_1,\dots,x_m)$ 
is a closed subspace of ${\cal L}(E_1 ,\dots, E_m;F)$.
\end{proposition}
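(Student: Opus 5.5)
The statement has typos; I read it as follows. The set $M$ consists of all $A \in {\cal L}(E_1,\dots,E_m;F)$ such that $A(x_1^n,\dots,x_m^n) \longrightarrow A(x_1,\dots,x_m)$ in $F$ whenever $((x_1^n,\dots,x_m^n))_n$ is $m$-bounded and $((x_1^n,\dots,x_m^n))_n \sim (x_1,\dots,x_m)$. The plan mirrors the proof of Proposition \ref{fechados-seq-prop}. The only new ingredient is that the limit point $(x_1,\dots,x_m)$ now appears, so we need a uniform estimate both along the sequence and at the limit.

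The first step is linearity. If $A,B \in M$ and $\lambda$ is a scalar, then for any $m$-bounded sequence $((x_1^n,\dots,x_m^n))_n \sim (x_1,\dots,x_m)$ we have
\[(A+\lambda B)(x_1^n,\dots,x_m^n) = A(x_1^n,\dots,x_m^n)+\lambda B(x_1^n,\dots,x_m^n) \longrightarrow A(x_1,\dots,x_m)+\lambda B(x_1,\dots,x_m),\]
which is $(A+\lambda B)(x_1,\dots,x_m)$. Hence $M$ is a linear subspace; this uses only that the relation $\sim$ is fixed independently of the operator.

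For closedness, let $(A_j)_j$ be a sequence in $M$ with $A_j \longrightarrow A$ in ${\cal L}(E_1,\dots,E_m;F)$. Fix an $m$-bounded sequence $((x_1^n,\dots,x_m^n))_n \sim (x_1,\dots,x_m)$ and put $K := \sup_n \|x_1^n\|\cdots\|x_m^n\| < \infty$ and $L := \|x_1\|\cdots\|x_m\|$. Given $\varepsilon>0$, choose $j_0$ with $\|A_{j_0}-A\|(K+L+1) < \varepsilon/2$. Since $A_{j_0}\in M$, there is $n_0$ such that $\|A_{j_0}(x_1^n,\dots,x_m^n)-A_{j_0}(x_1,\dots,x_m)\| < \varepsilon/2$ for all $n \geq n_0$. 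The triangle inequality then gives
\[\|A(x_1^n,\dots,x_m^n)-A(x_1,\dots,x_m)\| \leq \|(A-A_{j_0})(x_1^n,\dots,x_m^n)\| + \|A_{j_0}(x_1^n,\dots,x_m^n)-A_{j_0}(x_1,\dots,x_m)\| + \|(A_{j_0}-A)(x_1,\dots,x_m)\|.\]
The first and third terms are bounded by $\|A-A_{j_0}\|K$ and $\|A-A_{j_0}\|L$ respectively, because $\|B(y_1,\dots,y_m)\| \leq \|B\|\,\|y_1\|\cdots\|y_m\|$. The total is therefore less than $\varepsilon$ for $n \geq n_0$, so $A \in M$.

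The main point to watch is that the first term is controlled by $m$-boundedness (the product of the norms) rather than by boundedness of each coordinate. This is exactly why the hypothesis is phrased with $m$-bounded sequences. The third term is harmless because the limit point is fixed. No other obstacle is expected.
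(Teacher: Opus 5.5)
Your proposal is correct and follows essentially the same approach as the paper. Both use the three-term triangle-inequality estimate, with the first term controlled by the $m$-bound $K$ and the third by the fixed product $\|x_1\|\cdots\|x_m\|$ at the limit point; the only cosmetic differences are that you treat general $m$ directly (the paper writes out $m=2$ and says the general case is analogous) and split $\varepsilon$ into halves rather than thirds.
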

\begin{proof} For simplicity, we prove the bilinear case $m=2$, the general case follows analogously. Let us call $\mathfrak{F}$ the set we want to prove is a closed subspace of ${\cal L}(E_1, E_2;F)$. It is obvious that $\mathfrak{F}$ is a linear subspace. To prove that it is closed, let $(A_j)_j$ be a sequence in $\mathfrak{F}$ such that $A_j \longrightarrow A \in {\cal L}(E_1, E_2;F)$, that is, $\|A_j - A\|\longrightarrow 0$. In order to show that $A \in \mathfrak{F}$, let $((x_n,y_n))_n \subseteq E_1 \times E_2$ be a 2-bounded sequence such that $((x_n, y_n))_n \sim (x,y)$. We call $K = \sup\limits_n \|x_n\|\!\cdot\!\|y_n\|$. Given $\varepsilon > 0$ take $j_0 \in \mathbb{N}$ such that $\|A_{j_0} - A\|< \frac{\varepsilon}{3(K + \|x\|\cdot\|y\| + 1)}$.  As $A_{j_0} \in \mathfrak{F}$,  $A_{j_0}(x_n, y_n) \longrightarrow A_{j_0}(x,y)$ in $F$, so there is $j_0 \in \mathbb{N}$ such that $\|A_{j_0}(x_{n}, y_{n})- A_{j_0}(x,y)\| < \frac{\varepsilon}{3}$ for every $n \geq n_0$. Hence,
\begin{align*}\|A(x_n, &y_n) - A(x,y)\| \leq \\
&\leq \|A(x_n, y_n) - A_{j_0}(x_n,y_n)\| + \| A_{j_0}(x_n,y_n)- A_{j_0}(x,y)\| +  \|A_{j_0}(x,y)  - A(x,y)\| \\
&< \|A - A_{j_0}\|\!\cdot\!\|x_n\|\!\cdot\!\|y_n\| + \frac{\varepsilon}{3} + \|A_{j_0} - A\|\!\cdot\!\|x\|\!\cdot\!\|y\| \\
&< \frac{\varepsilon}{3(K + \|x\|\!\cdot\!\|y\| + 1)}\!\cdot \!\|x_n\|\!\cdot\!\|y_n\| + \frac{\varepsilon}{3} + \frac{\varepsilon}{3(K + \|x\|\!\cdot\!\|y\| + 1)}\!\cdot \!\|x\|\!\cdot\!\|y\| < \varepsilon
\end{align*}
for every $n \geq n_0$. This proves that  $A(x_n, y_n) \longrightarrow A(x,y)$, therefore $\mathfrak{F}$ is closed.
\end{proof}

\begin{corollary}\label{fechados-seq-geral-corol}
 Let $E_1,\dots,E_m,F$ be Banach space and, for $k=1,\dots,m$, let $\tau_k$ be a (non necessarily vector) topology on $E_k$ such that $\tau_k$-convergent sequences in  $E_k$ are bounded. Then the set of operators $A \in {\cal L}(E_1,\dots,E_m,F)$ such that $A(x_1^n, \dots,x_m^n) \longrightarrow A(x_1,\dots,x_m)$ in $F$ whenever  $x_k^n \stackrel{\tau_1}{\longrightarrow} x$ in $E_k$ for each $k=1,\dots,m$, is a closed subspace of ${\cal L}(E_1,\dots,E_m,F)$.
\end{corollary}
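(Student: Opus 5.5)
The plan is to derive the corollary directly from Proposition \ref{fechados-seq-geral-prop} by choosing a suitable relation $\sim$. I will define $\sim \subseteq \mathfrak{R}\times(E_1\times\cdots\times E_m)$ by declaring
$$((x_1^n,\dots,x_m^n))_n \sim (x_1,\dots,x_m) \iff x_k^n \stackrel{\tau_k}{\longrightarrow} x_k \mbox{ in } E_k \mbox{ for every } k=1,\dots,m.$$
The convergence in the statement is meant coordinatewise, with $\tau_k$ on the $k$-th coordinate and limit $x_k$. For this to be a legitimate relation from $\mathfrak{R}$, I must check that every sequence satisfying the right-hand side lies in $\mathfrak{R}$, that is, is $m$-bounded.

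That check is the one nontrivial step. By hypothesis, each $\tau_k$-convergent sequence $(x_k^n)_n$ is bounded, say $\|x_k^n\|\le C_k$ for all $n$. Hence
$$\sup_n \|x_1^n\|\cdots\|x_m^n\| \le C_1\cdots C_m < \infty,$$
so $((x_1^n,\dots,x_m^n))_n$ is $m$-bounded and belongs to $\mathfrak{R}$. Note that no vector structure on $\tau_k$ is used; only the boundedness assumption matters.

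With this relation, the set described in the corollary coincides exactly with the set $\mathfrak{F}$ of Proposition \ref{fechados-seq-geral-prop}. An operator $A$ belongs to the corollary's set if and only if $A(x_1^n,\dots,x_m^n)\to A(x_1,\dots,x_m)$ whenever $((x_1^n,\dots,x_m^n))_n\sim(x_1,\dots,x_m)$. The extra requirement of $m$-boundedness in the proposition is automatic, by the previous paragraph. Proposition \ref{fechados-seq-geral-prop} then gives that this set is a closed subspace of ${\cal L}(E_1,\dots,E_m;F)$.

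I do not expect any real obstacle. The only point needing care is the $m$-boundedness verification, which justifies restricting $\sim$ to $\mathfrak{R}$; the rest is a direct identification of the two sets.
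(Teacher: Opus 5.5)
Your proposal is correct and follows the paper's proof exactly. The paper defines the same relation $\sim$ by coordinatewise $\tau_k$-convergence, notes that such sequences are bounded and hence $m$-bounded, and invokes Proposition \ref{fechados-seq-geral-prop}; your explicit bound $C_1\cdots C_m$ and your reading of the typo ($\tau_1$, $x$ as $\tau_k$, $x_k$) match the intended argument.
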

\begin{proof}
Since bounded sequences in $E_1\times\cdots\times E_m$ are $m$-bounded, it is enough to define
$$((x_1^n, \dots,x_m^n))_n \sim (x_1, \dots,x_m) \Longleftrightarrow x_1^n \stackrel{\tau_1}{\longrightarrow} x_1 \text{, \dots ,} x_m^n \stackrel{\tau_m}{\longrightarrow} x_m $$
and call on Proposition \ref{fechados-seq-geral-prop}.
\end{proof}

\begin{example}\rm Using that weakly convergent sequences in Banach spacess are bounded \cite[Proposition 6.2.5]{Botelho}, the set of multilinear operators $A \in {\cal L}(E_1,\dots,E_m; F)$ that are weakly sequentially continuous, that is, $A(x_1^n, \dots,x_m^n) \longrightarrow A(x_1, \dots,x_m)$ in $F$ whenever $x_k^n \stackrel{w}{\longrightarrow} x_k$ in $E_k$ for each $k=1,\dots,m$, is a closed subspace of ${\cal L}(E_1,\dots,E_m;F)$ by Corollary \ref{fechados-seq-geral-corol}. This is a well known fact, these operators have been studied extensively since the seminal paper \cite{AHV}, both in the theory of nonlinear operators and in infinite dimensional holomorphy (for the general theory, see the monographs \cite{Din, Hajek}, for specific results, see \cite{Aron2, Castillo1, Castillo2}, for recent developments, see \cite{JLPAMS, sheldon, VP}.
\end{example}

Next we give an application of Proposition \ref{fechados-seq-geral-prop}, that does not follow from Corollary \ref{fechados-seq-geral-corol}, which recovers a closed subspace of multilinear operators that have proved to be helpful in the literature.

\begin{example}\rm In \cite{Ewerton2} the authors remark that, from the perspective of the theory of ideals of multilinear operators, the class of weakly sequentially continuous multilinear operators from the previous example does not satisfy some desirable properties (see \cite[Example 4.4]{Ewerton2}). For this reason they studied a new classe, closely related to the class of weakly sequentially continuous operators, which satisfies such desirable properties.

Let $E_1, \ldots, E_m,F$ be Banach spaces. According to \cite[Definition 4.5]{Ewerton2}:\\
$\bullet$ A sequence $((x_1^n,\dots,x_m^n))_n$ in $E_1\times\cdots\times E_m$ {\it converges multilineaarly} to $(x_1, \dots,x_m) \in E_1\times\cdots\times E_m$ if $B(x_1^n, \dots,x_m^n) \longrightarrow B(x_1, \dots,x_m)$ for every continuous $m$-linear form $B \in {\cal L}(E_1,\dots,E_m;\mathbb{K})$. \\
$\bullet$ An operator $A \in {\cal L}(E_1,\dots,E_m;F)$ is {\it multilinearly sequentially continuous} if  $A(x_1^n, \dots,x_m^n) \longrightarrow A(x_1, \dots,x_m)$ in $F$ whenever $((x_1^n, \dots,x_m^n))_n$ converges multilinearly to  $(x_1, \dots,x_m)$.

 In \cite[Theorem 4.8]{Ewerton2} it is proven that the space of multilinearly sequentially continous operators is closed in ${\cal L}(E_1,\dots,E_m;F)$. Proposition  \ref{fechados-seq-geral-prop} recovers  this fact because, as multilinearly convergent sequences are $m$-bounded \cite[Lemma 4.7]{Ewerton2}, it suffices to define
$$((x_1^n, \dots,x_m^n))_n \sim (x_1, \dots,x_m) \Longleftrightarrow ((x_1^n, \dots,x_m^n))_n \mbox{ converges multilinearly to } (x_1, \dots,x_m). $$
\end{example}

Again we finish with an example of multilinear operators that, as far as we know, have not been investigated yet.

\begin{example}\rm In \cite{Peralta} the authors define the {\it Right topology $\tau_R$} on a Banach space $E$ as the locally convex topology on $E$ generated by the seminorms $x \in E \mapsto \|T(x)\|$, where $T$ is any weakly compact linear operator from $E$ to any Banach space. They also define a linear operator $T \colon E \to F$ to be {\it pseudo weakly compact} if $T$ sends Right-null sequences in $E$ to norm null sequences in $F$, that is, $T(x_n) \longrightarrow 0$ in $F$ whenever $x_n \stackrel{\tau_R}{\longrightarrow} 0$ in $E$. These notions have been studied by several authors, for very recent developments see \cite{mik, arXiv2026}. As Right-convergent sequences are weakly convergent, hence bounded, the linear case of Corollary \ref{fechados-seq-geral-corol} recovers the fact that the subspace of pseudo weakly compact operators is closed.

Mimicking the successful definition of weakly sequentially continuous multilinear operators, we say that an operator $A \in {\cal L}(E_1,\dots,E_m;F)$ is {\it Right sequentially continuous} if $A(x_1^n, \dots,x_m^n) \longrightarrow A(x_1, \dots,x_m)$ in $F$ whenever $x_k^n \stackrel{\tau_R}{\longrightarrow} x_k$ in $E_k$ for each $k=1,\dots,m$. Corollary \ref{fechados-seq-geral-corol} gives that the space of Right sequentially continuous operators is closed in ${\cal L}(E_1,\dots,E_m;F)$.
\end{example}

\section{Fixing variables}

 \indent\indent Let $E_1, \ldots, E_m,F$ be Banach spaces. An $m$-linear operator $A \colon E_1 \times \cdots \times E_m \to F$ is continuous if and only if $A$ is {\it separately continuous}, that is, for every $k = 1,\ldots, m$, and all fixed $a_1 \in E_1, \ldots, a_{k-1}\in E_{k-1}, a_{k+1} \in E_{k+1}, \ldots, a_m \in E_m$, the linear operator
$x_k \in E_k \mapsto A(a_1, \ldots, a_{k-1}, x_k, a_{k+1}, \ldots, a_m) \in F $
 is bounded (see, e.g., \cite[Theorem 1.2]{df}). As expected, this result motivated the study of multilinear operators that enjoy a certain property (of linear operators) separately. Spaces of such multilinear operators are the subject of this section.

Just to fix the notation, given a property $\mathfrak{P}$ of bounded linear operators from $E_j$ to $F$, $j = 1, \ldots, m$, we say that an operator $A \in {\cal L}(E_1, \ldots, E_m;F)$ {\it has $\mathfrak{P}$ separately}  if for every $k = 1,\ldots, m$, and all fixed $a_1 \in E_1, \ldots, a_{k-1}\in E_{k-1}, a_{k+1} \in E_{k+1}, \ldots, a_m \in E_m$, the bounded linear operator
$$A_k^{a_{1},\dots,a_{m}}\colon E_k\longrightarrow F~,~ A_k^{a_{1},\dots,a_{m}}(x_k)=A(a_1,\dots,a_{k-1},x_k,a_{k+1},\dots,a_m),$$
has property $\mathfrak{P}$.

\begin{proposition} \label{ic3z}
Let ${\cal C}$ be a subclass of the class of all Banach spaces over $\mathbb{K}$, let $F$ be a Banach space, and let $\mathfrak{P}$ be a property of bounded linear operators from spaces belonging to $\cal C$ to $F$. Suppose that, for every $ E \in {\cal C}$, $\{u \in {\cal L}(E,F): u \mbox{ has } \mathfrak{P}\}$ is a closed subspace of ${\cal L}(E,F)$. Then, for all  $m \in \mathbb{N}$ and $E_1, \ldots, E_m \in {\cal C}$,
$$M: =\{A \in {\cal L}(E_1, \ldots, E_m;F) : A \mbox{ has } \mathfrak{P} \mbox{ separately}\}$$
is a closed subspace of ${\cal L}(E_1, \ldots, E_m;F)$.
\end{proposition}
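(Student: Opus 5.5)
The plan is to reduce everything to the closedness hypothesis on the linear level. For each $k \in \{1,\ldots,m\}$ and each choice of fixed vectors $a_j \in E_j$, $j \neq k$, I will show that the fixing map
$$\Phi_k^{a}\colon {\cal L}(E_1,\ldots,E_m;F)\longrightarrow {\cal L}(E_k,F)~,~\Phi_k^{a}(A)=A_k^{a_1,\ldots,a_m},$$
is linear and continuous. Since $M$ is precisely the intersection, over all $k$ and all choices of the $a_j$'s, of the preimages $(\Phi_k^{a})^{-1}(N_k)$, where $N_k:=\{u \in {\cal L}(E_k,F): u \mbox{ has } \mathfrak{P}\}$, the conclusion will follow at once. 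Indeed, preimages of closed subspaces under continuous linear maps are closed subspaces, and arbitrary intersections of closed subspaces are closed subspaces.

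The first step is to check that $\Phi_k^{a}$ is well defined and linear. For fixed $A$, the map $x_k\mapsto A(a_1,\ldots,x_k,\ldots,a_m)$ is linear, and it is bounded with
$$\|A_k^{a_1,\ldots,a_m}\|\leq \|A\|\prod_{j\neq k}\|a_j\|.$$
Linearity in $A$ is immediate from the pointwise definition of the vector operations. The same estimate, applied to $A-B$, gives
$$\|\Phi_k^{a}(A)-\Phi_k^{a}(B)\|\leq \|A-B\|\prod_{j\neq k}\|a_j\|,$$
so $\Phi_k^{a}$ is a bounded linear operator.

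The second step is to invoke the hypothesis. Since $E_k \in {\cal C}$, the set $N_k$ is a closed subspace of ${\cal L}(E_k,F)$. Hence $(\Phi_k^{a})^{-1}(N_k)$ is a closed subspace of ${\cal L}(E_1,\ldots,E_m;F)$. Finally, I will write
$$M=\bigcap_{k=1}^m\ \bigcap_{a_j\in E_j,\, j\neq k}(\Phi_k^{a})^{-1}(N_k),$$
which is exactly the definition of having $\mathfrak{P}$ separately.

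Alternatively, closedness can be checked sequentially: if $A_n\to A$ with $A_n\in M$, then $\Phi_k^a(A_n)\to\Phi_k^a(A)$ by the estimate above, so $\Phi_k^a(A)\in N_k$. There is no real obstacle here. The only point needing care is that the hypothesis must be applied to each individual space $E_k$, which is why the statement requires all of $E_1,\ldots,E_m$ to lie in ${\cal C}$.
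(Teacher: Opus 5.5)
Your proof is correct and is essentially the paper's argument. The paper also uses the estimate $\|A_n^{a_2,\ldots,a_m}-A^{a_2,\ldots,a_m}\|\leq \|A_n-A\|\,\|a_2\|\cdots\|a_m\|$ to pass the limit to the fixed-variable linear operators and then invokes closedness of the operators with $\mathfrak{P}$; it phrases this sequentially rather than as an intersection of preimages under the bounded linear maps $\Phi_k^a$, and your preimage formulation also delivers the linear-subspace part, which the paper dismisses as easy.
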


\begin{proof} Let $m \in \mathbb{N}, E_1, \ldots, E_m \in {\cal C}$ be given. It is easy to see that $M$ is a linear subspace. Suppose that $(A_n)_n$ is a sequence of operators in $M$ such that $A_n \longrightarrow A\in\mathcal{L}(E_1,\dots,E_m;F)$. Fixing $a_2\in E_2,\dots,a_m\in E_m$, each of the linear operators $A_n^{a_2, \ldots, a_m} \colon E_1 \to F$, $n \in \mathbb{N}$ has property $\mathfrak{P}$. For every $x_1 \in E_1$,
$$\|A_n^{a_2, \ldots, a_m}(x_1) - A^{a_2, \ldots, a_m}(x_1)\| = \|(A_n- A)(x_1,a_2, \ldots, a_m)\|\leq \|A_n -A\|\!\cdot\!\|x_1\|\!\cdot\! \|a_2\| \cdots \|a_m\|.$$
Taking the supremum over $x_1  \in B_{E_1}$ and then making $n \longrightarrow \infty$ we get $A_n^{a_2, \ldots, a_m} \longrightarrow A^{a_2, \ldots, a_m}$ in ${\cal L}(E_1,F)$. By assumption, $\{u \in {\cal L}(E_1,F): u \mbox{ has } \mathfrak{P}\}$ is closed in ${\cal L}(E_1,F)$, therefore $A^{a_2, \ldots, a_m}$ has property $\mathfrak{P}$. This proves that $A$ has $\mathfrak{P}$ in the first variable. The cases of the other variables are identical.
\end{proof}

\begin{examples}\rm (i) For the definition of $M$-weakly compact linear operators from a Banach lattice to a Banach space, see Example \ref{exe-L-M}(ii). Multilinear operators that are separately $M$-weakly compact were studied in \cite{Ariel}. Since the set of $M$-weakly compact linear operators from a Banach lattice $E$ to a Banach space $F$ is a closed subspace of ${\cal L}(E,F)$ \cite[Theorem 5.65(1)]{Alip}, from Proposition \ref{ic3z} it follows that, for Banach lattices $E_1, \ldots, E_m$ and a Banach space $F$, the set of separately $M$-weakly compact operators from $E_1 \times \cdots \times E_m$ to $F$ is a closed subspace of $\mathcal{L}(E_1,\ldots,E_m;F)$. This was neither proved nor mentioned in \cite{Ariel}.

\medskip

\noindent(ii) Similarly to the example above, and using \cite[Theorem 5.65(2)]{Alip}, for Banach spaces $E_1, \ldots, E_m$ and a Banach lattice $F$, the set of separately $L$-weakly compact operators from $E_1 \times \cdots \times E_m$ to $F$ is a closed subspace of $\mathcal{L}(E_1,\ldots,E_m;F)$. These multilinear operators were also studied in \cite{Ariel}.
\end{examples}

Instead of fixing only one variable, one can fix several variables of a multilinear operator. This idea has been explored in the theory of coherent ideals of multilinear operators (see \cite{Ribeiro} and references therein). Next result and the subsequent example illustrate how the ideas and examples of this section can be applied to obtain closed subspaces of multilinear operators that satisfy a condition based on fixing several variables. The following symbology/terminology shall simplify the presentation.

Let $A\colon X_1\times\cdots\times X_m\longrightarrow Y$ be an $m$-linear between linear spaces. Fixed $0 \leq k \leq m-1$, $p_1<\cdots<p_k\in\{1,\dots,m\}$ and  $a_{p_1}\in X_{p_1},\dots,a_{p_k}\in X_{p_k}$, it is  clear that the mapping $A_{a_{p_1},\dots,a_{p_k}}\colon X_1\times\stackrel{[p_1,\dots,p_k]}{\dots}\times X_m\longrightarrow Y$ given by \[A_{a_{p_1},\dots,a_{p_k}}(x_1,\stackrel{[p_1,\dots,p_k]}{\dots},x_m)=A(x_1,\dots,x_{p_1-1},a_{p_1},x_{p_1+1},\dots,x_{p_k-1},a_{p_k},x_{p_k+1},\dots,x_m),\]
  where $\stackrel{[p_1,\dots,p_k]}{\dots}$ means that coordinates  $p_1,\dots,p_k$ have been omitted, is an $(m-k)$-linear operator. 
We say that $A_{a_{p_1},\dots,a_{p_k}}$ is the  {\it resulting operator} fixing $a_{p_1},\dots,a_{p_k}$.

Let $\frak{P}$ be a property of $(m-k)$-linear operators from $E_{p_1} \times \cdots \times E_{p_k}$ to $F$, where $0 \leq k \leq m-1$ and $p_1<\cdots<p_k\in\{1,\dots,m\}$. An $m$-linear operator $A \colon X_1 \times \cdots \times X_m \to Y$ is said to {\it has property $\mathfrak{P}$ strongly} if all resulting operators of the type $A_{a_{p_1},\dots,a_{p_k}}$ have property $\mathfrak{P}$.

\begin{proposition}\label{l2dx} Let $E_1, \ldots, E_m,F$ be Banach spaces and let $\frak{P}$ be a property of continuous $(m-k)$-linear operators from $E_{p_1} \times \cdots \times E_{p_k}$ to $F$, where $0 \leq k \leq m-1$ and $p_1<\cdots<p_k\in\{1,\dots,m\}$, such that, for all such $k, p_1, \ldots, p_k$,
$${\cal L}^{\frak{P}}_{p_1, \ldots, p_k}: = \{B \in {\cal L}(E_{p_1}, \ldots, E_{p_k}; F) : B \mbox{ has } \frak{P}\} $$
is a closed subspace of ${\cal L}(E_{p_1}, \ldots, E_{p_k}; F)$.  Then the set of continuous $m$-linear operators $A \colon E_1 \times \cdots \times E_n \to F$ having property $\frak{P}$ strongly is a closed subspace of  ${\cal L}(E_1, \ldots, E_m; F)$.
\end{proposition}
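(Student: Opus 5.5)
The plan is to follow the proof of Proposition \ref{ic3z}, replacing linear sections by multilinear ones. Call $M$ the set of continuous $m$-linear operators having $\frak{P}$ strongly. The resulting operator depends linearly on $A$: for fixed $k$, positions $p_1<\cdots<p_k$ and vectors $a_{p_1},\dots,a_{p_k}$,
$$(A+\lambda B)_{a_{p_1},\dots,a_{p_k}}=A_{a_{p_1},\dots,a_{p_k}}+\lambda B_{a_{p_1},\dots,a_{p_k}}.$$
Each ${\cal L}^{\frak{P}}_{p_1,\ldots,p_k}$ is a linear subspace by hypothesis, so $M$ is a linear subspace. The case $k=0$ is just $A$ itself, and it is covered by the same argument.

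For closedness, I take a sequence $(A_n)_n$ in $M$ with $A_n\longrightarrow A$ in ${\cal L}(E_1,\ldots,E_m;F)$. I then fix $k$, positions $p_1<\cdots<p_k$ and vectors $a_{p_1},\dots,a_{p_k}$. The first step is that each resulting operator is continuous. Indeed, for $x_j$ in the unit balls of the remaining spaces,
$$\|(A_n-A)_{a_{p_1},\dots,a_{p_k}}(x_1,\stackrel{[p_1,\dots,p_k]}{\dots},x_m)\|\leq \|A_n-A\|\cdot\|a_{p_1}\|\cdots\|a_{p_k}\|.$$
Taking the supremum over the product of unit balls gives
$$\|(A_n)_{a_{p_1},\dots,a_{p_k}}-A_{a_{p_1},\dots,a_{p_k}}\|\leq \|A_n-A\|\,\|a_{p_1}\|\cdots\|a_{p_k}\|\longrightarrow 0.$$
So the resulting operators of $A_n$ converge in the space of continuous $(m-k)$-linear operators on the remaining factors to the resulting operator of $A$.

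Since each $(A_n)_{a_{p_1},\dots,a_{p_k}}$ has $\frak{P}$ and ${\cal L}^{\frak{P}}_{p_1,\ldots,p_k}$ is closed, the limit $A_{a_{p_1},\dots,a_{p_k}}$ has $\frak{P}$. The choices of $k$, positions and vectors were arbitrary, so $A\in M$.

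No step is a real obstacle. The only point needing care is the bookkeeping: ${\cal L}^{\frak{P}}_{p_1,\ldots,p_k}$ must be read as the space of $(m-k)$-linear operators on the factors not indexed by $p_1,\dots,p_k$, and the norm estimate above holds uniformly in the free variables. This uniformity is exactly what is needed for norm convergence of the sections.
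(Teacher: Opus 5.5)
Your proof is correct and is essentially the paper's argument: both rest on the estimate $\|A_{a_{p_1},\dots,a_{p_k}}-S_{a_{p_1},\dots,a_{p_k}}\|\leq\|A-S\|\,\|a_{p_1}\|\cdots\|a_{p_k}\|$ combined with the closedness of ${\cal L}^{\mathfrak{P}}_{p_1,\ldots,p_k}$. The paper phrases this via closures: it shows that, for fixed vectors, the set of $A$ whose resulting operator has $\mathfrak{P}$ is closed, and then writes $M$ as the intersection of these sets, after assuming $a_{p_j}\neq 0$ so it can divide; your sequential version needs no such normalization.
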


\begin{proof} Let us call $M$ the set we have to prove is a closed subspace. It is clear that $M$ is a linear subspace. Let $0 \leq k \leq m-1$, $p_1<\cdots<p_k\in\{1,\dots,m\}$ and  $a_{p_1}\in E_{p_1},\dots,a_{p_k}\in E_{p_k}$ be given. Now we prove that
$${\cal L}^{\frak{P}}_{a_{p_1}, \ldots, a_{p_k}} := \{A \in {\cal L}(E_1, \ldots, E_m;F) : A_{a_{p_1},\dots,a_{p_k}} \mbox{ has } \frak{P}\} $$
is closed in ${\cal L}(E_1, \ldots, E_n;F)$. It is clear that we can assume $a_{p_j} \neq 0$ for every $j$. Given $A \in \overline{{\cal L}^{\frak{P}}_{a_{p_1}, \ldots, a_{p_k}}}$, for every $\varepsilon > 0$ there is $S \in {\cal L}^{\frak{P}}_{a_{p_1}, \ldots, a_{p_k}}$ such that $\|A - S\| < \frac{\varepsilon}{\|a_{p_1}\|\cdots\|a_{p_k}\|}$. Then, $S_{a_{p_1},\dots,a_{p_k}} \in {\cal L}(E_{p_1}, \ldots, E_{p_k}; F)$ has $\frak{P}$ and
$$\|A_{a_{p_1},\dots,a_{p_k}} - S_{a_{p_1},\dots,a_{p_k}}\|\leq \|A - S\|\!\cdot\! \|a_{p_1}\|\cdots\|a_{p_k}\| < \varepsilon. $$
This shows that $A_{a_{p_1},\dots,a_{p_k}} \in \overline{{\cal L}^{\frak{P}}_{p_1, \ldots, p_k}}= {\cal L}^{\frak{P}}_{p_1, \ldots, p_k}$, that is $A_{a_{p_1},\dots,a_{p_k}}$ has $\frak{P}$. This proves that ${\cal L}^{\frak{P}}_{a_{p_1}, \ldots, a_{p_k}}$ is closed. Therefore,
$$M = \textstyle{\bigcap} \{{\cal L}^{\frak{P}}_{a_{p_1}, \ldots, a_{p_k}} : 0 \leq k \leq m-1,  p_1<\cdots<p_k\in\{1,\dots,m\}, a_{p_1}\in E_{p_1},\dots,a_{p_k}\in E_{p_k}\} $$
is closed as the intersection of closed sets.
\end{proof}

\begin{example}\rm For multilinear operators from the cartesian product of Banach lattices to a Banach space, let $\frak{P}$ be the property of being $M$-weakly compact. In Example \ref{im5h}(ii) we checked that the assumption on ${\cal L}^{\frak{P}}_{p_1, \ldots, p_k}$ in Proposition \ref{l2dx} holds true. Therefore,   for all Banach lattices $E_1,\dots,E_m$ and any Banach space $F$,
the set of strongly $M$-weakly compact $m$-linear operators from $E_1 \times \cdots \times E_m$ to $F$ is a closed subspace of $\mathcal{L}(E_1,\dots,E_m;F)$. Strongly $M$-weakly compact multilinear operators were introduced in \cite{Vinger} and played an important role in the main results proved there.
\end{example}

\section{Factorable multilinear operators}
\indent\indent The paper \cite{pietsch83} is a cornerstone in the study of multilinear operators with special properties, where Pietsch started, among other things, the study of multilinear operators that factor through linear/multilinear operators belonging to certain ideals. More precisely, the idea is to investigate multilinear operators $A \in {\cal L}(E_1, \ldots, E_m;F)$ between Banach spaces which admit a factorization
\[
\begin{tikzpicture}
\matrix (m) [matrix of math nodes,
             row sep=0.5pt,
             column sep=0.5pt] {
E_1 &\times \cdots \times& E_m & \xrightarrow{\quad A \quad} & F \\
\mathllap{\scriptstyle S_{1}}\bigg\downarrow & \vdots&  \mathllap{\scriptstyle S_{m}}\bigg\downarrow &  & \\
G_1 &\times \cdots \times& G_m \\
};

\draw[->] (m-3-3) -- node[midway, below right] {$ \scriptstyle B$}(m-1-5);
\end{tikzpicture}\,.
\]
where $G_1, \ldots, G_m, G_0$ are Banach spaces, $S_1, \ldots, S_m$ are linear operators belonging to certain (possibly different) operator ideals and $B$ is continuous $m$-linear operator with certain properties. In this case we write $A = B \circ(S_1, \ldots, S_m)$, meaning that
\begin{equation}\label{p5m9}A(x_1, \ldots, x_m) = B(S_1(x_1), \ldots, S_m(x_m)) \mbox{ for all } (x_1, \ldots, x_m) \in E_1 \times \cdots \times E_m.
\end{equation}
 Of course, we are interested in the closedness of the subspace consisting of such $m$-linear operators. We shall address, first, the more general case in which $A$ admits a factorization 
\[\label{o2va}
\setlength{\arraycolsep}{2pt}
\begin{array}{ccccc}
E_1 &\times \dots \times& E_m & \xrightarrow{\quad A \quad} & F \\
\mathllap{\scriptstyle S_1}\bigg\downarrow & \vdots&  \mathllap{\scriptstyle S_m}\bigg\downarrow &  & \mathllap{\scriptstyle S_0}\bigg\uparrow  \\
G_{1} &\times \dots \times& G_{m} & \xrightarrow{\quad B \quad} & G_0
\end{array}
\]
where $G_1, \ldots, G_m,G_0$ are Banach spaces, $S_j$ belongs to a certain subspace of ${\cal L}(E_j;G_j), j = 1,\ldots,m$, $B$ belongs to a certain subspace of ${\cal L}(G_1, \ldots, G_m;G_0)$ and $S_0$ belongs to a certain subspace of ${\cal L}(G_0;F)$. Similarly to (\ref{p5m9}), in this case we write $A = S_0 \circ B \circ(S_1, \ldots, S_m)$. Our purpose is to prove that, if all these certain subspaces are closed, then the space of all multilinear operators that admit such a factorization is closed as well. Concrete examples shall appear throughout the section.

We shall need some preparation on $p$-Banach spaces, to which we refer the reader to \cite{kalton}. For $0 < p \leq 1$, a $p$-normed space is a linear space $E$ endowed with a $p$-norm $\|\cdot\|$. In this case, the map
$$(x,y) \in E^2 \mapsto \|x-y\|^p $$
is a metric on $E$. We shall consider $E$ as a metric space endowed with this metric. If this metric space is complete, then $E$ is a $p$-Banach space. In particular, every $p$-Banach space is a complete metrizable (in general non-locally convex) topological vector space. The easy proofs of items (a) and (b) below are omitted (for (a), use \cite[Proposition 5.5.2]{Garling}); for (c) see \cite[Lemma 3.2.5]{livro}.

\begin{lemma} \label{2hjl}{\rm (a)} If $0 < q < p \leq 1$, then every $p$-normed space ($p$-Banach space) is a $q$-normed space ($q$-Banach space).\\
{\rm (b)} Let $q, p \in (0,1]$ and let $(E,\|\cdot\|)$ be a $p$-Banach space. If $\|\cdot\|$ is a $q$-norm on $E$, then $E$ is a $q$-Banach space.\\
{\rm (c)} A $p$-normed space $E$, $0 < p \leq 1$, is a $p$-Banach space if and only if, for every sequence $(x_n)_n$ in $E$ with $\sum\limits_{n=1}^\infty \|x_n\|^p < \infty$, the series $\sum\limits_{n=1}^\infty x_n$ converges in $E$.
\end{lemma}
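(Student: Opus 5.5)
The lemma has three parts. Parts (a) and (b) will be proved directly from the definitions; for part (c) we will follow the standard route for normed spaces, replacing $\|\cdot\|$ by $\|\cdot\|^p$ throughout.

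For (a), let $\|\cdot\|$ be a $p$-norm and $0<q<p\le 1$. Homogeneity $\|\lambda x\|=|\lambda|\,\|x\|$ and definiteness are unchanged, so only the $q$-triangle inequality $\|x+y\|^q\le \|x\|^q+\|y\|^q$ needs checking. Setting $s=q/p\in(0,1)$, the $p$-inequality gives
$\|x+y\|^q\le(\|x\|^p+\|y\|^p)^{s}$.
The map $t\mapsto t^s$ is subadditive on $[0,\infty)$, which is the content of \cite[Proposition 5.5.2]{Garling}, so $(a+b)^s\le a^s+b^s$ and the $q$-inequality follows.

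For the Banach part of (a), note that the metrics $d_p=\|x-y\|^p$ and $d_q=\|x-y\|^q$ satisfy $d_q=d_p^{s}$. Since $t\mapsto t^s$ is a homeomorphism of $[0,\infty)$ fixing $0$, the two metrics have the same Cauchy sequences and the same convergent sequences. Hence completeness transfers from $d_p$ to $d_q$.

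Part (b) uses the same observation with $d_q=d_p^{q/p}$, now for arbitrary $q,p\in(0,1]$. Again Cauchy sequences and limits coincide, so $E$ is complete for the $q$-metric, i.e.\ a $q$-Banach space.

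For (c), first suppose $E$ is $p$-Banach and $\sum\|x_n\|^p<\infty$. Iterating the $p$-triangle inequality gives
$\|\sum_{n=k}^{l}x_n\|^p\le\sum_{n=k}^l\|x_n\|^p$.
So the partial sums are Cauchy in $d_p$, and the series converges by completeness.

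Conversely, assume every $p$-summable series converges, and let $(y_n)$ be a $d_p$-Cauchy sequence. Choose indices $n_1<n_2<\cdots$ with $\|y_{n_{k+1}}-y_{n_k}\|^p<2^{-k}$. The telescoping series $\sum_k(y_{n_{k+1}}-y_{n_k})$ then has $p$-summable terms, so it converges. This means the subsequence $(y_{n_k})$ converges. A Cauchy sequence with a convergent subsequence converges, so $E$ is complete.

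The only non-routine ingredient is the subadditivity of $t^s$ in (a), which comes from concavity (or from the cited proposition). Everything else mirrors the classical Banach-space arguments.
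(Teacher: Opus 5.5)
Your proof is correct and follows the route the paper indicates. The paper omits the proofs of (a) and (b) as easy, pointing only to Garling's Proposition 5.5.2 for the elementary inequality in (a); that inequality is equivalent to your subadditivity of $t\mapsto t^{q/p}$, and your comparison $d_q=d_p^{q/p}$ covers (b). For (c) the paper cites the standard summable-series characterization of completeness, which your Cauchy-subsequence argument proves directly.
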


To give the main result of this section the generality we need, the following definition is necessary.

\begin{definition}\label{ujn7}\rm (a) An {\it injective class of linear operators} on a Banach space $E$ is a correspondence $\mathcal{C}(E)$ that assigns, to each Banach space  $F$, a closed subspace $\mathcal{C}(E,F)$ of $\mathcal{L}(E,F)$ satisfying the following condition: If $T\in \mathcal{C}(E,F)$, $G$ is a Banach space and $I\colon F\to G$ is an injective operator, then $I\circ T\in\mathcal{C}(E,G)$.\\
{\rm (b)} A {\it surjective class of linear operators} on a Banach space $F$ is a correspondence $\mathcal{D}(E)$ that assigns, to each Banach space  $E$, a closed subspace $\mathcal{D}(E,F)$ of $\mathcal{L}(E,F)$ satisfying the following condition: If $T\in \mathcal{D}(E,F)$, $G$ is a Banach space and $P \in {\cal L}(G,E)$ is a surjective operator, then $T \circ P\in\mathcal{D}(G,F)$.\\
{\rm (c)} For $m \in \mathbb{N}$, an {\it $m$-semi-ideal} is a subclass ${\cal N}$ of the class of all continuous $m$-linear operators between Banach spaces such that, for all Banach spaces $E_1,\dots,E_m,F$, the component $\mathcal{N}(E_1,\dots,E_m;F)\coloneqq\mathcal{L}(E_1,\dots,E_m;F)\cap\mathcal{N}$ satisfies:

\noindent (1) $\mathcal{N}(E_1,\dots,E_m;F)$ is a closed subspace of $\mathcal{L}(E_1,\dots,E_m;F)$.

\noindent (2) If $A\in\mathcal{N}(E_1,\dots,E_m;F)$, $u_i\in\mathcal{L}(G_i,E_i), i = 1,\ldots, m$, are surjective operators, and $t\in\mathcal{L}(F,H)$ is an injective operator, then  $t\circ A\circ(u_1,\dots,u_m)\in\mathcal{N}(G_1,\dots,G_m;F)$.\\
(d) Let $\cal N$ be an $m$-semi ideal, let $\mathcal{C}(E_1),\dots,\mathcal{C}(E_m)$ be injective classes of linear operators on the Banach spaces $E_1, \ldots, E_m$, and let $\mathcal{D}(F)$ be a surjective class of linear operators on the Banach space $F$. We say that an operator $A \in {\cal L}(E_1, \ldots, E_m;F)$ belongs to $\mathcal{D}(F)\circ\mathcal{N}\circ(\mathcal{C}(E_1),\dots,\mathcal{C}(E_m))$ if there are Banach spaces $G_0,G_1,\dots,G_m$, linear operators $S_i\in\mathcal{C}(E_i,G_i), i=1,\dots,m$, $S_0\in\mathcal{D}(G_0,F)$, and an $m$-linear operator $B\in\mathcal{N}(G_1,\dots,G_m;G_0)$ such that $A=S_0\circ B\circ(S_1,\dots,S_m)$.
\end{definition}

Given a closed operator ideal ${\cal I}$, of course the correspondence
$E \mapsto {\cal I}(E,F) $ for every $F$ is a injective class of linear operators on $E$, and $F \mapsto {\cal I}(E,F) $ for every $E$ is a surjective class os linear operators on $F$. In this case we shall write ${\cal I}(E)$ and ${\cal I}(F)$ (we believe no ambiguity will arise). Classes of linear operators that do not come from operators ideals will appear in the examples. If $\cal M$ is a closed ideal of multilinear operators, then, for every $m \in \mathbb{N}$, the $m$-linear operators belonging to $\cal M$ form an $m$-semi-ideal.

By ${\rm id}_E$ we mean the identity operator on the Banach space $(E, \|\cdot\|)$. When $E$ is endowed with a norm $\|\cdot\|_1$ different from $\|\cdot\|$, the identity operators $(E, \|\cdot\|) \to (E, \|\cdot\|_1)$ and $(E, \|\cdot\|_1) \to (E, \|\cdot\|)$ shall be denoted by different symbols. As usual, given a sequence $(E_n)_{n=1}^\infty$ of Banach spaces, by $\left(\bigoplus\limits_{n=1}^\infty E_n\right)_1$ we denote the Banach spaces of sequences $(x_n)_n$, $x_n \in E_n$ for every $n \in \mathbb{N}$, such that $\|(x_n)_n\|_1 = \sum\limits_{n=1}^\infty \|x_n\|< \infty$.  In the next statement we employ the notation that has just been fixed in Definition \ref{ujn7}.

\begin{theorem} \label{Prop-8.3}
$\mathcal{D}(F)\circ\mathcal{N}\circ(\mathcal{C}(E_1),\dots,\mathcal{C}(E_m))$ is a closed subspace of $\mathcal{L}(E_1,\dots,E_m;F)$.
\end{theorem}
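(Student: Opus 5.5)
The plan is to use the classical Pietsch-type ``direct sum'' argument. I will glue infinitely many factorizations together through $\ell_q$-sums of the intermediate spaces. Definition \ref{ujn7} only gives ``injective'', ``surjective'' and ``closed'', so every glued operator must be exhibited as a norm limit of finite pieces. Each finite piece is built from an original factor composed with a canonical injection $J_k$ into, or a canonical surjective projection $P_k$ onto, a direct sum.

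\emph{Linear subspace.} Closedness under scalars is clear, since we may put the scalar into $B$. For a sum $A+A'$ with $A=S_0\circ B\circ(S_1,\dots,S_m)$ and $A'=S'_0\circ B'\circ(S'_1,\dots,S'_m)$, set $\widetilde G_i=G_i\oplus G'_i$ and $\widetilde S_i=J_1S_i+J_2S'_i$. Each $J_jS_i$ lies in $\mathcal C(E_i,\widetilde G_i)$ by injectivity, and $\mathcal C(E_i,\widetilde G_i)$ is a subspace. Similarly, set $\widetilde S_0(y,y')=S_0y+S'_0y'=S_0P_1+S'_0P_2\in\mathcal D(\widetilde G_0,F)$ and $\widetilde B(z,z')=(B(z),B'(z'))$. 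The latter equals $J_1\circ B\circ(P_1,\dots,P_1)+J_2\circ B'\circ(P_2,\dots,P_2)$, so it lies in $\mathcal N$ by (2) of Definition \ref{ujn7}(c). Composing gives $A+A'=\widetilde S_0\circ\widetilde B\circ(\widetilde S_1,\dots,\widetilde S_m)$; the cross terms vanish because $P_jJ_l=0$ for $j\neq l$.

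\emph{Closedness.} Let $A$ be in the closure. Choose $A^{(k)}$ in the class with $\|A-A^{(k)}\|<2^{-k}$. Writing $A_1=A^{(1)}$ and $A_n=A^{(n)}-A^{(n-1)}$, we get $A=\sum_n A_n$ with each $A_n$ in the class (by the first step) and $\sum_n\|A_n\|<\infty$. Factor $A_n=S_0^n\circ B_n\circ(S_1^n,\dots,S_m^n)$. Using homogeneity and the fact that all classes are subspaces, rescale so that $\|B_n\|\le 1$ and $\|S_i^n\|\le \|A_n\|^{1/(m+1)}$ for $i=0,1,\dots,m$; drop the $n$ with $A_n=0$. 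Put $q=m+1$, let $G_i=\left(\bigoplus_n G_i^n\right)_{q}$ for $i=1,\dots,m$, and let $G_0=\left(\bigoplus_n G_0^n\right)_{q/m}$. These are Banach spaces since $q/m\ge1$. Define
\[S_i(x)=(S_i^n x)_n,\qquad B\big((z_1^n)_n,\dots,(z_m^n)_n\big)=\big(B_n(z_1^n,\dots,z_m^n)\big)_n,\qquad S_0\big((y_n)_n\big)=\sum_n S_0^n y_n .\]
Boundedness of these maps comes from the estimates below.
\begin{itemize}
\item For $S_i$, $i\ge1$: we have $\sum_n\|S_i^n\|^{q}\le\sum_n\|A_n\|<\infty$.
\item For $B$: Hölder's inequality gives $\big(\sum_n\prod_i\|z_i^n\|^{q/m}\big)^{m/q}\le\prod_i\|(z_i^n)_n\|_q$.
\item For $S_0$: Hölder's inequality with conjugate exponents $q/m$ and $q$ applies, and $\sum_n\|S_0^n\|^{q}<\infty$.
\end{itemize}
The series then gives $S_0\circ B\circ(S_1,\dots,S_m)=\sum_nA_n=A$.

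\emph{Main obstacle: membership of the glued operators.} It remains to show that $S_i\in\mathcal C(E_i,G_i)$, $S_0\in\mathcal D(G_0,F)$ and $B\in\mathcal N(G_1,\dots,G_m;G_0)$. The truncations are
\[\sum_{n\le N}J_nS_i^n,\qquad \sum_{n\le N}S_0^nP_n,\qquad \sum_{n\le N}J_n\circ B_n\circ(P_n,\dots,P_n).\]
Here $J_n$ is the canonical injection of the $n$-th summand and $P_n$ is the canonical surjective projection onto it. By the injective and surjective conditions, by (2), and by the subspace property, these truncations lie in $\mathcal C$, $\mathcal D$ and $\mathcal N$ respectively. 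Their distances to $S_i$, $S_0$ and $B$ are controlled by $\big(\sum_{n>N}\|S_i^n\|^q\big)^{1/q}$, by the Hölder tail $\big(\sum_{n>N}\|S_0^n\|^{q}\big)^{1/q}$, and, for $B$, by a tail argument which is the delicate point. For $B$ I must check carefully that $\|B-B_{\le N}\|\to0$. This fails unless the $B_n$ themselves decay, so I will instead normalize the factors so that $\|B_n\|\le\|A_n\|^{1/(m+2)}$ and all $\|S_i^n\|\le\|A_n\|^{1/(m+2)}$, and redo the exponents with $q=m+2$. The norm of the tail of $B$ is then at most $\sup_{n>N}\|B_n\|\to0$. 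I expect this exponent bookkeeping to be the only real difficulty; if a non-locally-convex exponent appears, Lemma \ref{2hjl} would handle the $p$-Banach completeness. Closedness of the component spaces then yields the memberships, so $A$ belongs to $\mathcal D(F)\circ\mathcal N\circ(\mathcal C(E_1),\dots,\mathcal C(E_m))$.
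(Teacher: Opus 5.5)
The gap is the normalization step. If you rescale the factors of $A_n$ by scalars while keeping $S_0^n\circ B_n\circ(S_1^n,\dots,S_m^n)=A_n$, the product of the scalars must be $1$. So $\pi_n:=\|S_0^n\|\,\|B_n\|\,\|S_1^n\|\cdots\|S_m^n\|$ is invariant under any such rescaling.

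Both of your normalizations require $\pi_n\le\|A_n\|$. The first is $\|B_n\|\le1$ with $\|S_i^n\|\le\|A_n\|^{1/(m+1)}$; the second bounds all $m+2$ factors by $\|A_n\|^{1/(m+2)}$. Either way you need a factorization of $A_n$ that attains its norm, and nothing in your argument produces one.

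In fact, for the factorization of $A_n=A^{(n)}-A^{(n-1)}$ coming from your direct-sum step, comparing each glued map with its first component shows that $\pi_n$ is at least the corresponding product for the factorization of $A^{(n)}$. Hence $\pi_n\ge\|A^{(n)}\|\to\|A\|$, which is not small. The factors therefore need not decay, and the series defining $S_i$, $B$, $S_0$ need not converge. The subspace property cannot help, since it only allows sums and scalar multiples.

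The missing idea is the paper's Claim 2: every $A$ in the class admits a factorization with $\|A\|=\|S_0\|\,\|B\|\,\|S_1\|\cdots\|S_m\|$, hence one whose $m+2$ factors all have norm $\|A\|^{1/(m+2)}$. The proof has two stages.
\begin{enumerate}
\item Rescale so that $\|T_1\|=\|B\circ(T_1,\dots,T_m)\|=\|A\|$ and $\|T_i\|=1$ for $i\ge2$.
\item Successively replace the intermediate spaces by equivalent renormings. First use $\|h\|_0=\max\{\|h\|,\|T_0(h)\|\}$ on the target of $B$. Then use $\|h_1\|_1=\max\{\|h_1\|,\|B_0\circ(h_1,T_2,\dots,T_m)\|\}$ on $H_1$, and continue in the same way. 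These steps force $\|S_0\|\le1$, $\|S_1\|\le\|A\|$, $\|S_i\|\le1$ for $i\ge2$, and finally $\|B_m\|\le1$.
\end{enumerate}

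This is where the hypotheses of Definition \ref{ujn7} are really used. The formal identities between a space and its renorming are isomorphisms, hence injective and surjective, so membership in $\mathcal{C}$, $\mathcal{D}$ and $\mathcal{N}$ is preserved. Your argument uses those hypotheses only for canonical maps of direct sums.

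Once you have this, the rest of your plan goes through, and more simply than with $\ell_q$-sums. Since $\|A_n\|<3\cdot2^{-n}$ for $n\ge2$, you already have $\sum_n\|A_n\|^{1/(m+2)}<\infty$. You can then take $\ell_1$-sums of all intermediate spaces, and all three glued series converge absolutely. This is the paper's Claim 3, phrased there as completeness of the class as a $\frac{1}{m+2}$-Banach space.
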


\begin{proof} This proof was inspired in \cite[Section 8]{Braunss}. For simplicity, we write $\mathcal{A}=\mathcal{D}(F)\circ\mathcal{N}\circ(\mathcal{C}(E_1),\dots,\mathcal{C}(E_m))$.

\medskip

\noindent{\bf Claim 1.} $\cal A$ is a linear subspace of $\mathcal{L}(E_1,\dots,E_m;F)$.

It is clear that $\lambda A \in {\cal A}$ for every $A \in {\cal A}$ and every scalar $\lambda$. Given $A_1, A_2 \in {\cal A}$, there are Banach spaces $G_0,G_1,\dots,G_m,H_0,H_1,\dots,H_m$, linear operators $S_i\in\mathcal{C}(E_i,G_i),T_i\in\mathcal{C}(E_i,H_i),\, i=1,\dots,m$, $S_0\in\mathcal{D}(G_0,F),\,T_0\in\mathcal{D}(H_0,F)$, and $m$-linear operators $B_1\in\mathcal{N}(G_1,\dots,G_m;G_0),\,B_2\in\mathcal{N}(H_1,\dots,H_m;H_0)$ such that $A_1=S_0\circ B_1\circ(S_1,\dots,S_m)$ and $A_2=T_0\circ B_2\circ(T_1,\dots,T_m)$. For each $i=0,\dots,m$, we consider the Banach space  $(G_i\oplus H_i,\|\cdot\|_\infty)$, which is the cartesian product endowed with the maximum norm, the projections
\[P_i\colon G_i\oplus H_i\to G_i~,~P_i(x,y)=x, \mbox{ and }Q_i\colon G_i\oplus H_i\to H_i~,~ Q_i(x,y)=y,\]
and the inclusions
\[J_i\colon G_i\to G_i\oplus H_i~,~ J_i(x)=(x,0), \mbox{ and } U_i\colon H_i\to G_i\oplus H_i~,~U_i(y)=(0,y).\]
Consider also the linear operators $R_0 := S_0\circ P_0 + T_0\circ Q_0 \in {\cal L}(G_0 \oplus H_0, F)$, $R_i := J_i\circ S_i + U_i\circ T_i \in {\cal L}(E_i, G_1 \oplus H_i)$ for $i = 1, \dots, m$, and the multilinear operator
$$B\coloneqq J_0\circ B_1\circ (P_1,\dots,P_m)+U_0\circ B_2 \circ(Q_1,\dots,Q_m) \in {\cal L}(G_1 \oplus H_1, \ldots, G_m \oplus H_m; G_0 \oplus H_0).$$
Since $P_0$ and $Q_0$ are surjective, $S_0\circ P_0$ and $T_0\circ Q_0$ belong to $\mathcal{D}(G_0\oplus H_0,F)$, so does $R_0$. 
Since $J_i$ and $U_i$ are isometric embeddings, $J_i\circ S_i$ 
and $J_i\circ Q_i$ belong to $\mathcal{C}(E_i,G_i\oplus H_i)$, so does $R_i$. 
Since $B_1$ and $B_2$ belong to $\mathcal{N}$, $B\in\mathcal{N}(G_1\oplus H_{1}, \dots ,G_m\oplus H_{m} ; G_0\oplus H_0)$. A simple (and boring) computation shows that the diagram
\[
\setlength{\arraycolsep}{1pt}
\begin{array}{ccccc}
E_1 &\times \dots \times& E_m & \xrightarrow{~ A_1+A_2 ~} & F \\
\mathllap{\scriptstyle R_1}\bigg\downarrow & \vdots&  \mathllap{\scriptstyle R_m}\bigg\downarrow &  & \mathllap{\scriptstyle R_0}\bigg\uparrow  \\
G_1\oplus H_{1} &\times \dots \times& G_m\oplus H_{m} & \xrightarrow{\quad B \quad} & G_0\oplus H_0
\end{array}\,
\]
is commutative, that is, $A_1+A_2=R_0\circ B\circ(R_1,\dots,R_m) \in {\cal A}$.

\medskip

\noindent{\bf Claim 2.} Every $A\in\mathcal{A}$ admits a factorization $A=U_0\circ V\circ(U_1,\dots,U_m)$, with $U_0 \in \mathcal{D}(G; F)$, $V \in \mathcal{N}(G_1, \dots, G_m; G)$, $U_i \in \mathcal{C}(E_i, G_i)$ for $i = 1, \dots, m$, so that $$\|A\|^\frac{1}{m+2}=\|U_0\|=\|V\|=\|U_1\|=\dots=\|U_m\|.$$
The case $A = 0$ is obvious. Given $0 \neq A\in\mathcal{A}$, there is a factorization  $A=T_0\circ B\circ(T_1,\dots,T_m)$ with $0 \neq T_0 \in \mathcal{D}(H; F)$, $0 \neq B \in \mathcal{N}(H_1, \dots, H_m; H)$, $0 \neq T_i \in \mathcal{C}(E_i, H_i)$ for $i = 1, \dots, m$. For all $\alpha,\lambda>0$,
$$A=\frac{T_0}{\alpha \lambda}\circ \alpha B\circ\left(\lambda\|T_2\|\cdots\|T_m\|T_1,\frac{T_2}{\|T_2\|},\dots,\frac{T_m}{\|T_m\|}\right).$$ So, we can choose $\lambda > 0$ such that $\lambda\|T_1\|\cdots\|T_m\|=\|A\|$, and then we can choose $\alpha > 0$ so that $$\left\|\alpha B\circ\left(\lambda\|T_2\|\cdots\|T_m\|T_1,\frac{T_2}{\|T_2\|},\dots,\frac{T_m}{\|T_m\|}\right)\right\|=\|A\|.$$ 
It follows that, in the factorization $A=T_0\circ B\circ(T_1,\dots,T_m)$, we may assume that \[\| T_1 \| = \| B\circ (T_1, \dots, T_m) \| = \| A \| \mbox{ ~and~ } \| T_i \| = 1 \mbox{ for } i = 2, \dots, m.\]
For $h\in H$ we define $\| h \|_0 \coloneqq \max \{ \| h \|, \| T_0(h) \| \}$, which is a norm equivalent to the original norm on $H$ because $T_0$ is a bounded linear operator. So, $G := (H, \| \cdot \|_0)$ is a Banach space. We call $I\colon G\to H$ the identity operator, which is obviously an isomorphism, we define $S_0\colon G\to H$ by $S_0\coloneqq T_0\circ I$ and $B_0\colon H_1\times\cdots\times H_m\to G$ by $B_0=B\circ I$. 
We have $S_0 \in \mathcal{D}(G_0; F)$ and $B_0 \in \mathcal{N}(G_1, \dots, G_m; G)$. From $\| S_0(h) \| = \| T_0(h) \| \leq \| h \|_0$ for every $h \in G$ we get $\| S_0 \| \leq 1$. For all $x_1 \in E_1, \ldots, x_m \in E_m$, using that$ \| B\circ (T_1, \dots, T_m) \| = \| A \|$ we obtain
\begin{align*}
\| B_0(T_1( x_1), \dots, T_m (x_m)) \|_0&= \max \{ \| B \circ (T_1, \ldots, T_m)( x_1, \ldots, x_m)) \|, \| A( x_1, \dots, x_m) \| \}\\& \leq \| A \|\!\cdot\! \| x_1 \| \cdots \| x_m\|,
\end{align*}
which gives $\| B_0\circ (T_1, \dots, T_m) \| \leq \| A \|$.

For each $h_1 \in H_1$,  consider the continuous $(m-1)$-linear operator $B_0\circ(h_1,T_2, \dots, T_m)\colon  E_2\times\cdots \times E_m\to G$ given by $$[B_0\circ(h_1,T_2, \dots, T_m)](x_2,\dots,x_m)=B_0(h_1,T_2(x_2), \dots, T_m(x_m)),$$ and define the norm $\| h_1 \|_1 := \max \{ \| h_1 \|, \| B_0\circ (h_1,T_2, \dots, T_m) \| \}$ on $H_1$, which is equivalent to original norm of $H_1$, to get the Banach space $G_1 := (H_1, \| \cdot \|_1)$. Of course, the identity operator $I_1\colon H_1\to G_1$ is an isomorphism. Defining $S_1=I_1\circ T_1$ and $B_1\colon G_1\times H_2\times\cdots \times H_m\to G$ by $B_1=B_0 \circ (I_1^{-1},{\rm id}_{H_2},\dots,{\rm id}_{H_m})$, we get
$S_1 \in \mathcal{C}(E_1,G_1)$ and $B_1 \in \mathcal{N}(G_1,H_2,\dots, H_m; G)$. From $\| B_1\circ (h_1,T_2,\dots,T_m) \| = \| B_0\circ (h_1,T_2,\dots,T_m) \| \leq \| h_1 \|_1$ it follows that $\| B_1\circ ({\rm id}_{G_1},T_2,\dots,T_m) \| \leq 1$. And
 $$\| S_1(x_1)\|_1 = \max \{ \| S_1(x_1)\|, \| B_0\circ (T_1(x_1),T_2,\dots,T_m) \| \} \leq \| A \|\!\cdot\! \| x_1 \|$$
for every $x_1 \in E_1$ implies $\| S_1 \| \leq \| A \|$.

We define an equivalent norm on $H_2$ by putting $\| h_2 \|_2 := \max \{ \| h_2 \|, \| B_1\circ ({\rm id}_{G_1},h_2,S_3, \dots, S_m) \| \}$, where the operator $ B_1\circ ({\rm id}_{G_1},h_2,S_3, \dots, S_m)$ is defined similarly to what we did above, to get the Banach space $G_2 := (H_2, \| \cdot \|_2)$. Call $I_2\colon H_2\to G_2$ the identity operator, $S_2\coloneqq I_2\circ T_2$ and
$$B_2\colon G_1\times G_2\times H_3\times\cdots H_m\to G~,~B_2=B_1 \circ ({\rm id}_{G_1},I_2^{-1},{\rm id}_{H_3},\dots,{\rm id}_{H_m}).$$
The following diagram is helpful in following the argument:
\[
\begin{tikzpicture}
\matrix (m) [matrix of math nodes,
             row sep=0.5pt,
             column sep=0.5pt] {
G_1&\times &G_2&\times & H_3 &\times \cdots \times& H_m & \xrightarrow{~~\quad B_2 \quad~~} & F \\
\mathllap{\scriptstyle {\rm id}_{G_1}}\bigg\downarrow & & \mathllap{\scriptstyle I_{2}^{-1}}\bigg\downarrow & & \mathllap{\scriptstyle {\rm id}_{H_3}}\bigg\downarrow & \vdots&  \mathllap{\scriptstyle {\rm id}_{H_m}}\bigg\downarrow &  & \\
G_1&\times& H_2&\times &H_3 &\times \cdots \times& H_m \\
};

\draw[->] (m-3-7) -- node[midway, below right] {$ \scriptstyle B_1$}(m-1-9);
\end{tikzpicture}\,.
\]
We have $S_2 \in \mathcal{C}(E_2,G_2)$ and $B_2 \in \mathcal{N}(G_1,G_2,H_3,\dots, H_m; G)$. From $\| B_2 \circ ({\rm id}_{G_1},h_2,T_3,\dots,T_m) \| = \| B_1\circ ({\rm id}_{G_1},h_2,T_3,\dots,T_m) \| \leq \| h_2 \|_2$ we get $\| B_2 \circ ({\rm id}_{G_1}, {\rm id}_{G_2},T_3,\dots,T_m) \| \leq 1$. And from
$$\| S_2(x_2)\|_2 = \max \{ \| S_2(x_2)\|, \| B_1 \circ({\rm id}_{G_1},T_2(x_2),T_3,\dots,T_m) \| \} \leq  \| x_2 \|$$
 for every $x_2 \in E_2$ we obtain $\| S_2 \| \leq 1$.

Repeating the procedure we construct operators $S_i \in \mathcal{C}(E_i; G_i)$ with $\| S_i \| \leq 1$, $i = 2, \dots, m$, and $B_m \in \mathcal{N}(G_1, \dots, G_m; G)$ with $\| B_m \| \leq 1$ such that $A = S_0 \circ B_m \circ (S_1,\dots,S_m)$. In particular,
$$\| A \| \leq \| S_0 \|\!\cdot\! \|B_m \| \!\cdot\!\| S_1 \| \cdots \| S_m\| \leq \| A \|.$$
Thus far we have a factorization $A=S_0\circ B_m \circ(S_1,\dots,S_m)$ so that $\|A\|=\|S_0\|\!\cdot\!\|B_m\|\!\cdot\!\|S_1\|\cdots\|S_m\|$, where $S_0 \in \mathcal{D}(G; F)$, $B_m \in \mathcal{N}(G_1, \dots, G_m; G)$, $S_i \in \mathcal{C}(E_i, G_i)$, $i = 1, \dots, m$. Putting $\alpha=\|A\|^\frac{1}{m+2}$ and taking $U_0=\frac{\alpha S_0}{\|S_0\|} \in \mathcal{D}(G; F)$, $V=\frac{\alpha B_m}{\|B_m\|} \in \mathcal{N}(G_1, \dots, G_m; G)$ and $U_i=\frac{\alpha S_i}{\|S_i\|}\in\mathcal{C}(E_i, G_i)$, $i = 1, \dots, m$, we obtain the desired factorization.

\medskip

\noindent{\bf Claim 3.} The usual norm $\|\cdot\|$ makes $\mathcal{A}$ a $\frac{1}{m+2}$-Banach space.

By Claim 1 we know that $\cal A$ is linear space. Since $\|\cdot\|$ is a norm on $\cal A$, it is a $\frac{1}{m+2}$-norm by Lemma \ref{2hjl}(a). Completeness is all that is left to be proved to establish the claim. Let $(A^n)_n$ be a sequence in $\cal A$ such that $\sum\limits_{n=1}^\infty \|A^n\|^\frac{1}{m+2} < \infty$. The norm on ${\cal L}(E_1, \ldots, E_m;F)$ is a $\frac{1}{m+2}$-norm by Lemma \ref{2hjl}(a). As ${\cal L}(E_1, \ldots, E_m;F)$ is a Banach space, it is also $\frac{1}{m+2}$-Banach space by Lemma \ref{2hjl}(b). The convergence $ \sum\limits_{n=1}^\infty \|A^n\|^\frac{1}{m+2} < \infty$ gives, together with Lemma \ref{2hjl}(c), that there exists $A\in\mathcal{L}(E_1,\dots,E_m;F)$ such that $A=\sum\limits_{n=1}^\infty \|A^n\|^\frac{1}{m+2}$, that is $\left\|\sum\limits_{j=1}^n A^n - A\right\|^{\frac{1}{m+2}} \stackrel{n \to \infty}{\longrightarrow} 0$. Of course, the convergence also occurs with respect to the usual norm. It is enough to show that $A\in\mathcal{A}$. By Claim 2, for each $n$ we can write $A^n=S^n_0\circ B^n\circ(S_1^n,\dots,S_m^n)$ with \begin{equation}\label{ykwz}\|A^n\|^\frac{1}{m+2}=\|S_0^n\|=\|B^n\|=\|S_1^n\|=\dots=\|S_m^n\|,
\end{equation} where $G_1^n, \ldots, G_m^n, G^n$ are Banach spaces, $S_0^n \in \mathcal{D}(G^n; F)$, $B^n \in \mathcal{N}(G_1^n, \dots, G_m^n; G^n)$, $S_i^n \in \mathcal{C}(E_i, G_i^n)$ for $i = 1, \dots, m$. Set $G=\left(\bigoplus\limits_{n=1}^\infty G^n\right)_1$ and $ G_i=\left(\bigoplus\limits_{n=1}^\infty G_i^n\right)_1$, $i = 1, \dots, m$. For every $n$, by $J^n\colon G^n\to G$ and $J_i^n\colon G_i^n\to G_i$ we denote the canonical inclusions, which are isometric embeddings, and by $P^n\colon G\to G^n$ and $P_i^n\colon G_i\to G_i^n$ the canonical projections, which are surjective operators. Note that, for $n \in \mathbb{N}$ and $i \in \{1, \ldots,m\}$,
\begin{equation}\label{j87b}\|J_i^n\circ S_i^n\|=\|S_i^n\|= \|J^n\circ B^n \circ (P_1^n,\dots,P_m^n)\|\leq\|B^n\| \mbox{ and } \|S_0^n\circ P^n\|\leq\|S_0^n\|.
\end{equation}
Consider the maps
$$S_i=\sum\limits_{n=1}^\infty J_i^n\circ S_i^n\colon E_1\to G_i~,~ S_i(x_i) =:(S_i^1(x_i),S_i^2(x_i),\dots), i = 1, \ldots,m,$$
\begin{align*}
B=\sum\limits_{n=1}^\infty &J^n\circ B^n\circ(P_1^n,\dots,P_m^n)\colon G_1\times\cdots\times G_m\to G \mbox{ given by}\\
&B(g_1,\dots,g_m):= (B^1(g_1^1,\dots,g_m^1),B^2(g_1^2,\dots,g_m^2),\dots), \mbox{and}
\end{align*}
\[S=\sum\limits\limits_{n=1}^\infty S_0^n\circ P^n\colon G\to F~,~S(g^1,g^2,\dots):= \sum\limits_{n=1}^\infty S_0^n(g^n).\]
Using (\ref{ykwz}), (\ref{j87b}), the convergence of the series $\sum\limits_{n=1}^\infty \|A^n\|^\frac{1}{m+2}$, that ${\cal C}(E_i)$ are injective classes of linear operators, $\cal N$ is an $m$-semi-ideal and ${\cal D}(F)$ is a surjective class of linear operators (in particular, the corresponding subspaces are closed), the maps above are well defined, each $S_i\in\mathcal{C}(E_i,G_i)$, $S\in\mathcal{D}(G,F)$ and $B\in\mathcal{N}(G_1, \dots, G_m; G)$. A routine computations shows that $A=S\circ B\circ(S_1,\dots,S_m)$, hence $A \in {\cal A}$. Claim 3 has been established.

By Claim 3, $\mathcal{A}$ is a $\frac{1}{m+2}$-Banach space. By Lemma \ref{2hjl}(b) it follows that $\cal A$ is a Banach space, hence it is a closed subspace of $\mathcal{L}(E_1,\dots,E_m;F)$.
\end{proof}

Calling ${\cal L}$ the class of all continuous multilinear operators Banach spaces, and keeping the notation of ${\cal D}(F), {\cal C}(E_1), \ldots, {\cal C}(E_m)$, the following two consequences of the theorem above are clear.

\begin{corollary} The subspace $\mathcal{D}(F)\circ\mathcal{L}\circ(\mathcal{C}(E_1),\dots,\mathcal{C}(E_m))$ is
closed in $\mathcal{L}(E_1,\dots,E_m;F)$.
\end{corollary}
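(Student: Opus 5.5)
The plan is to obtain the corollary as the special case ${\cal N}={\cal L}$ of Theorem \ref{Prop-8.3}. That theorem already shows that $\mathcal{D}(F)\circ\mathcal{N}\circ(\mathcal{C}(E_1),\dots,\mathcal{C}(E_m))$ is a closed subspace of $\mathcal{L}(E_1,\dots,E_m;F)$ whenever ${\cal N}$ is an $m$-semi-ideal. The data ${\cal D}(F)$ and ${\cal C}(E_1),\dots,{\cal C}(E_m)$ are the same injective and surjective classes as in the theorem. So the only thing left to verify is that ${\cal L}$, the class of all continuous $m$-linear operators between Banach spaces, is an $m$-semi-ideal in the sense of Definition \ref{ujn7}(c).

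I will check the two conditions of Definition \ref{ujn7}(c) for ${\cal N}={\cal L}$.

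\begin{enumerate}
\item[(1)] For all Banach spaces $E_1,\dots,E_m,F$, the component is ${\cal L}\cap\mathcal{L}(E_1,\dots,E_m;F)=\mathcal{L}(E_1,\dots,E_m;F)$. This is trivially a closed subspace of itself.
\item[(2)] Let $A\in\mathcal{L}(E_1,\dots,E_m;F)$, let $u_i\in\mathcal{L}(G_i,E_i)$ be surjective, and let $t\in\mathcal{L}(F,H)$ be injective. Then $t\circ A\circ(u_1,\dots,u_m)$ is $m$-linear, being a composition of linear maps with an $m$-linear one. It is continuous, with
$$\|t\circ A\circ(u_1,\dots,u_m)\|\leq\|t\|\,\|A\|\,\|u_1\|\cdots\|u_m\|.$$
So it lies in ${\cal L}(G_1,\dots,G_m;H)$. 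Here the surjectivity and injectivity hypotheses are not even needed. The target space in Definition \ref{ujn7}(c)(2) is written as $F$, but it is clearly meant to be $H$, and this causes no difficulty.
\end{enumerate}

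With these two checks, Theorem \ref{Prop-8.3} applies directly and gives the conclusion. There is no real obstacle. The only point deserving a word is that ${\cal L}$ is a legitimate "subclass" of the class of all continuous $m$-linear operators; it is the whole class, so this is immediate.
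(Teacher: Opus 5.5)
Your proposal is correct and matches the paper, which presents this corollary as a clear consequence of Theorem \ref{Prop-8.3} with $\mathcal{N}=\mathcal{L}$. Your explicit check that the $m$-linear operators in $\mathcal{L}$ satisfy both conditions of Definition \ref{ujn7}(c) is exactly the verification the paper leaves implicit, and you are right that the target space in condition (2) should read $H$.
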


\begin{corollary} Let $\mathcal{I}_0,{\cal I}_1, \dots,\mathcal{I}_m$ be closed operator ideals, and let $\mathcal{M}$ be a closed ideal of multilinear operators. Then, for all Banach spaces $E_1, \ldots, E_m,F$,  $$\mathcal{I}_0\circ\mathcal{M}\circ(\mathcal{I}_1,\dots,\mathcal{I}_m)(E_1, \ldots, E_m,F)$$ is a closed subspace of $\mathcal{L}(E_1,\dots,E_m;F)$.
\end{corollary}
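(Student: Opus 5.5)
The plan is to derive this corollary directly from Theorem \ref{Prop-8.3} by checking that the data $\mathcal{I}_0$, $\mathcal{I}_1,\dots,\mathcal{I}_m$, $\mathcal{M}$ fit Definition \ref{ujn7}. Concretely, I will take the surjective class $\mathcal{D}(F)$ to be $E\mapsto \mathcal{I}_0(E,F)$. For each $i=1,\dots,m$, I will take the injective class $\mathcal{C}(E_i)$ to be $G\mapsto \mathcal{I}_i(E_i,G)$. Finally, $\mathcal{N}$ will be the class of $m$-linear operators belonging to $\mathcal{M}$.

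First I check the linear classes. Since each $\mathcal{I}_i$ is a closed operator ideal, every component $\mathcal{I}_i(E,G)$ is a closed subspace of $\mathcal{L}(E,G)$. The ideal property shows that $I\circ T\in \mathcal{I}_i(E_i,G)$ for every bounded $I$, and in particular for injective $I$. Likewise $T\circ P\in\mathcal{I}_0(G,F)$ for every bounded $P$, surjective or not. So conditions (a) and (b) of Definition \ref{ujn7} hold, exactly as remarked right after that definition.

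Next I check the multilinear class. For a closed ideal of multilinear operators $\mathcal{M}$, each component $\mathcal{M}(G_1,\dots,G_m;G_0)$ is a closed subspace of $\mathcal{L}(G_1,\dots,G_m;G_0)$. The multi-ideal property gives $t\circ A\circ(u_1,\dots,u_m)\in\mathcal{M}$ for arbitrary bounded linear $u_i$ and $t$. Hence the $m$-linear part of $\mathcal{M}$ satisfies (1) and (2) of Definition \ref{ujn7}(c), so it is an $m$-semi-ideal, as the paper already observed.

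With these identifications, $\mathcal{I}_0\circ\mathcal{M}\circ(\mathcal{I}_1,\dots,\mathcal{I}_m)(E_1,\dots,E_m;F)$ is by definition the set $\mathcal{D}(F)\circ\mathcal{N}\circ(\mathcal{C}(E_1),\dots,\mathcal{C}(E_m))$ of Definition \ref{ujn7}(d). Theorem \ref{Prop-8.3} then says this set is a closed subspace of $\mathcal{L}(E_1,\dots,E_m;F)$. There is no real obstacle here. The only point needing care is that the intermediate spaces $G_0,\dots,G_m$ in the factorization range over all Banach spaces, which both definitions allow.
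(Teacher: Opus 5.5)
Your proposal is correct and is exactly the paper's argument. The paper states this corollary as an immediate consequence of Theorem \ref{Prop-8.3}, relying on the remark after Definition \ref{ujn7} that closed operator ideals give injective classes ($G\mapsto\mathcal{I}_i(E_i,G)$) and surjective classes ($E\mapsto\mathcal{I}_0(E,F)$), and that the $m$-linear part of a closed multi-ideal is an $m$-semi-ideal, which is precisely what you verify.
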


Let $\mathcal{N}$ be an $m$-semi-ideal, let $\mathcal{C}_1, \dots,\mathcal{C}_m$ be injective classes of linear operators on the Banach spaces $E_1, \ldots, E_m$, respectively. For any Banach space $F$, by
$$\mathcal{N}\circ(\mathcal{C}_1, \ldots, \mathcal{C}_m)(E_1, \ldots, E_m;F)$$
we mean the set of operators $A\in\mathcal{L}(E_1,\dots,E_m;F)$ for which there are Banach spaces $G_1,\dots,G_m$, linear operators $S_i\in\mathcal{C}_i(E_i,G_i),i=1,\dots,m$, and an $m$-linear operator $B\in\mathcal{N}(G_1,\dots,G_m;F)$ such that $A=B\circ(S_1,\dots,S_m)$, that is the diagram below commutes.
\[
\begin{tikzpicture}
\matrix (m) [matrix of math nodes,
             row sep=0.5pt,
             column sep=0.5pt] {
E_1 &\times \cdots \times& E_m & \xrightarrow{\quad A \quad} & F \\
\mathllap{\scriptstyle S_{1}}\bigg\downarrow & \vdots&  \mathllap{\scriptstyle S_{m}}\bigg\downarrow &  & \\
G_1 &\times \cdots \times& G_m \\
};

\draw[->] (m-3-3) -- node[midway, below right] {$ \scriptstyle B$}(m-1-5);
\end{tikzpicture}\,.
\]

The next statement employs the symbology we have just introduced.

\begin{proposition} \label{4gim} For every Banach space $F$, $\mathcal{N}\circ(\mathcal{C}_1, \ldots, \mathcal{C}_m)(E_1, \ldots, E_m;F)$ is a closed subspace of ${\cal L}(E_1, \ldots, E_m;F)$. In particular, $\mathcal{M}\circ(\mathcal{C}_1, \ldots, \mathcal{C}_m)(E_1, \ldots, E_m;F)$ is a closed subspace of ${\cal L}(E_1, \ldots, E_m;F)$ for every closed ideal of multilinear operators $\cal M$.
\end{proposition}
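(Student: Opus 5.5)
The natural plan is to deduce Proposition \ref{4gim} from Theorem \ref{Prop-8.3}, by choosing a surjective class of linear operators on $F$ that has no effect. Fix $F$ and let $\mathcal{D}(F)$ assign to each Banach space $G$ the whole space $\mathcal{D}(G,F):=\mathcal{L}(G,F)$. This is trivially a closed subspace and is stable under composition on the right with surjective operators, so it is a surjective class of linear operators on $F$. Theorem \ref{Prop-8.3} then says that $\mathcal{D}(F)\circ\mathcal{N}\circ(\mathcal{C}_1,\dots,\mathcal{C}_m)$ is a closed subspace of $\mathcal{L}(E_1,\dots,E_m;F)$. It therefore suffices to prove
$$\mathcal{L}(F)\circ\mathcal{N}\circ(\mathcal{C}_1,\dots,\mathcal{C}_m)=\mathcal{N}\circ(\mathcal{C}_1, \ldots, \mathcal{C}_m)(E_1, \ldots, E_m;F).$$

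The inclusion ``$\supseteq$'' is immediate. Given $A=B\circ(S_1,\dots,S_m)$ with $B\in\mathcal{N}(G_1,\dots,G_m;F)$, take $G_0=F$ and $S_0={\rm id}_F\in\mathcal{L}(F,F)$.

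The reverse inclusion is the main obstacle, because an $m$-semi-ideal is only assumed stable under composition with \emph{injective} operators on the left. So $S_0\circ B$ need not lie in $\mathcal{N}$ when $S_0$ is an arbitrary operator. I would try two routes.

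\begin{enumerate}
\item Modify $G_0$ so that $S_0$ becomes injective. Pass to the quotient $G_0/\ker S_0$, with $\tilde S_0$ injective, and use the quotient map $\pi$. The difficulty is that $\pi\circ B$ must again belong to $\mathcal{N}$, and the axioms do not give this directly.
\item Avoid the reduction and rerun the proof of Theorem \ref{Prop-8.3} with the factor $S_0$ deleted. Claim 1 (linearity) works with $B:=B_1\circ(P_1,\dots,P_m)+B_2\circ(Q_1,\dots,Q_m)$. Here the projections $P_i,Q_i$ are surjective, so each term lies in $\mathcal{N}$, and their sum lies in $\mathcal{N}$ because $\mathcal{N}(\cdot;F)$ is a subspace. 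Claim 2 (the normalized factorization) goes through with $m+1$ factors in place of $m+2$, giving $\|A\|^{1/(m+1)}=\|B\|=\|S_1\|=\cdots=\|S_m\|$ via the same renorming of $H_1,\dots,H_m$. The renorming of $H$ is simply skipped.
\end{enumerate}

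Route (ii) is the one I would carry out. For Claim 3, take $\sum\|A^n\|^{1/(m+1)}<\infty$ and normalized factorizations $A^n=B^n\circ(S^n_1,\dots,S^n_m)$. Form $G_i=(\bigoplus_n G_i^n)_1$ and the operators $S_i=\sum_n J_i^n\circ S_i^n$, which lie in $\mathcal{C}_i(E_i,G_i)$ because the class is closed and stable under the isometric embeddings $J_i^n$. Then set $B=\sum_n B^n\circ(P_1^n,\dots,P_m^n)$. This series converges absolutely in norm, since each term has norm at most $\|B^n\|=\|A^n\|^{1/(m+1)}$. Each term belongs to $\mathcal{N}(G_1,\dots,G_m;F)$ because the $P_i^n$ are surjective, so $B$ lies in this closed subspace. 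A routine check gives $A=\sum_n A^n=B\circ(S_1,\dots,S_m)$.

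Lemma \ref{2hjl} then shows that the set is a $\frac1{m+1}$-Banach space, hence a Banach space under the usual norm, hence closed. The ``in particular'' statement follows because a closed ideal of multilinear operators $\mathcal{M}$ yields an $m$-semi-ideal for each $m$, as observed after Definition \ref{ujn7}.
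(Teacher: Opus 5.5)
Your proposal is correct and follows the paper's proof. The paper also notes that the reduction to Theorem \ref{Prop-8.3} via $\mathcal{L}(F)$ breaks down for $m$-semi-ideals, because $S_0\circ B$ need not remain in $\mathcal{N}$; it then reruns Claims 1--3 without the factor $S_0$ to get a $\frac{1}{m+1}$-Banach space, exactly as in your route (ii). The only minor difference is the ``in particular'' clause: the paper obtains it from Theorem \ref{Prop-8.3} through the identity $\mathcal{M}\circ(\mathcal{C}_1,\ldots,\mathcal{C}_m)(E_1,\ldots,E_m;F)=\mathcal{L}(F)\circ\mathcal{M}\circ(\mathcal{C}_1(E_1),\ldots,\mathcal{C}_m(E_m))$, whereas you specialize the first assertion, which is equally valid.
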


\begin{proof} First let us remark that the second assertion follows immediately from Theorem \ref{Prop-8.3}. This is the case because, denoting by ${\cal L}(F)$ the class of all continuous linear operators from $F$ to $F$, we have
\begin{equation}\label{oj8r}\mathcal{M}\circ(\mathcal{C}_1, \ldots, \mathcal{C}_m)(E_1, \ldots, E_m;F) = \mathcal{L}(F)\circ\mathcal{M}\circ(\mathcal{C}_1(E_1),\dots,\mathcal{C}_m(E_m)),
\end{equation}
where the latter space is the one from Theorem \ref{Prop-8.3}. The point is that (\ref{oj8r}) is no longer true if the closed ideal of multilinear operators $\cal M$ is replaced with an $m$-semi-ideal $\cal N$, so the case where $\cal N$ is an $m$-semi-ideal demands its own proof.

Now let us see that it takes only a few adjustments in the proof of Theorem \ref{Prop-8.3} to get the first assertion. Instead of reworking the whole proof, we just point out the necessary adjustments. Call $\mathcal{B}= \mathcal{N}\circ(\mathcal{C}_1, \ldots, \mathcal{C}_m)(E_1, \ldots, E_m;F)$.

In the proof of Claim 1 it is enough to disregard the spaces and operators with subscripts $0$ to get that $\mathcal{B}$ is a linear subspace.

The statement of Claim 2 is the following: Every $A\in\mathcal{B}$ admits a factorization $A=V\circ(U_1,\dots,U_m)$ such that $\|A\|^\frac{1}{m+1}=\|V\|=\|U_1\|=\dots=\|U_m\|$, where $V \in \mathcal{N}(G_1, \dots, G_m; G)$ and $U_i \in \mathcal{C}_i(E_i, G_i)$ for $i = 1, \dots, m$. To prove the claim, start with a factorization $A = B \circ (T_1, \ldots, T_m)$ and note that $A={\rm id}_F\circ B\circ(T_1,\dots,T_m)$. Use that, for every Banach space $G$ and any bijective operator (isomorphism) $I\in\mathcal{L}(G,F)$, it holds $I\circ B\in\mathcal{N}$. In the original proof of Claim 2, replace the operator $T_0$ with the operator ${\rm id}_F$ and proceed as in there. The procedure will result in a factorization $A = B_m \circ (S_1, \ldots, S_m)$ so that $\|A\| = \|B_m\|\!\cdot\!\|S_1\|\cdots\|S_m\|$. To complete the proof of the claim, put $\alpha=\|A\|^\frac{1}{m+1}$ and take $V=\frac{\alpha B_m}{\|B_m\|}$,  $U_i=\frac{\alpha S_i}{\|S_i\|}, i = 1, \dots, m$.

The statement of Claim 3 is the following: $\mathcal{B}$ is a  $\frac{1}{m+1}$-Banach space. Disregard the spaces $G^n$'s and the operators with subscript 0 in the original proof of Claim 3. Use the same argument as in there to conclude that it is enough to check that $\cal B$ is complete regarding the usual norm as a $\frac{1}{m+1}$-norm. Given a sequence $(A^n)_n$ in $ \mathcal{B}$ with $\sum\limits_{n=1}^\infty \|A^n\|^\frac{1}{m+1} < \infty$, from Claim 2, for each $n$ writhe $A^n=B^n\circ(S_1^n,\dots,S_m^n)$ so that $$\|A^n\|^\frac{1}{m+1}=\|B^n\|=\|S_1^n\|=\dots=\|S_m^n\|,$$ where $B^n \in \mathcal{N}(G_1^n, \dots, G_m^n; F)$ and $S_i^n \in \mathcal{C}_i(E_i, G_i^n)$ for $i = 1, \dots, m$. Now repeat the steps of the original proof to get the claim and to complete the proof of the result. 
\end{proof}

\begin{example}\rm Recall that a continuous linear operator $T$ from a Banach lattice $E$ to a Banach space $X$ is {\it almost Dunford-Pettis} if $T$ sends disjoint weakly null sequences in $E$ to norm null sequences in $X$. This class was introduced by S\'anchez \cite{sanchez}, significant developments
appeared in \cite{aqzzouz, wnuk1994}, for recent contributions, see \cite{mik, ardvin, ardvali, khabaoui}.

By $M$-$W(E)$ and $ADP(E)$ we denote the classes of $M$-weakly compact and almost Dunford-Pettis linear operators, respectively, from a Banach lattice $E$ to a Banach space. For every Banach space $F$, $M$-$W(E,F)$ and $ADP(E,F)$ are closed subspaces of ${\cal L}(E,F)$; indeed, this follows from the linear case $m = 1$ of Proposition \ref{fechados-seq-prop}. Furthermore, it is clear that  $M$-$W(E)$ and $ADP(E)$ are injective classes of linear operators on $E$.

For Banach lattices $E_1,\ldots, E_m$ and a Banach space $F$, for every choice of ${\cal C}_j\in \{M\mbox{-}W, ADP\}$, $j = 1, \ldots, m$, ${\cal L}\circ ({\cal C}_1, \ldots, {\cal C}_m)(E_1, \ldots, E_m;F) $
is a closed subspace of ${\cal L}(E_1, \ldots, E_m;F)$ by Proposition \ref{4gim}. In particular, ${\cal L}\circ (M\mbox{-}W, \ldots, M\mbox{-}W)(E_1, \ldots, E_m;F) $
is a closed subspace. For an application of the class of multilinear operators ${\cal L}\circ (M\mbox{-}W, \ldots, M\mbox{-}W)$, see \cite[Proposition]{Ariel}.
\end{example}

\begin{corollary} Let $\mathcal{I},\mathcal{I}_1,\dots,\mathcal{I}_m$ be closed operator ideals and let $\mathcal{M}$ be a closed ideal of multilinear operators. For all Banach spaces $E_1, \ldots, E_m$ and $F$,  the sets $\mathcal{M}\circ(\mathcal{I}_1,\dots,\mathcal{I}_m)(E_1, \ldots, E_m;F)$, $\mathcal{L}\circ(\mathcal{I}_1,\dots,\mathcal{I}_m)(E_1, \ldots, E_m;F)$ and $\mathcal{L}\circ(\mathcal{I},\dots,\mathcal{I})(E_1, \ldots, E_m;F)$ are all closed subspaces of $\mathcal{L}(E_1,\dots,E_m;F)$.
\end{corollary}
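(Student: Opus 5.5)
The corollary follows by specializing the earlier results. The main work is to check that each ingredient satisfies the hypotheses of Definition \ref{ujn7}.

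\textbf{First set.} For $\mathcal{M}\circ(\mathcal{I}_1,\dots,\mathcal{I}_m)(E_1,\ldots,E_m;F)$, apply the second assertion of Proposition \ref{4gim} with $\mathcal{C}_j := \mathcal{I}_j(E_j)$. As remarked right after Definition \ref{ujn7}, for a closed operator ideal $\mathcal{I}_j$ the correspondence $G \mapsto \mathcal{I}_j(E_j,G)$ is an injective class of linear operators on $E_j$. Indeed, each component is a closed subspace by closedness of the ideal. Moreover, the ideal property gives $I\circ T\in \mathcal{I}_j(E_j,G)$ for every bounded $I$, in particular for every injective $I$. Since $\mathcal{M}$ is a closed ideal of multilinear operators, Proposition \ref{4gim} yields that this set is a closed subspace.

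\textbf{Second set.} For $\mathcal{L}\circ(\mathcal{I}_1,\dots,\mathcal{I}_m)(E_1,\ldots,E_m;F)$, take $\mathcal{M} = \mathcal{L}$, the class of all continuous multilinear operators. Its components $\mathcal{L}(E_1,\dots,E_m;F)$ are trivially closed (the whole space), and it is stable under composition with arbitrary linear operators on either side. So $\mathcal{L}$ is a closed ideal of multilinear operators, or at least its $m$-linear part is an $m$-semi-ideal, and either the first or the second assertion of Proposition \ref{4gim} applies. Alternatively, one can invoke the first corollary after Theorem \ref{Prop-8.3} with $\mathcal{D}(F)=\mathcal{L}(F)$, using $\mathcal{L}\circ\mathcal{L}=\mathcal{L}$ to absorb the outer linear factor into the multilinear one.

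\textbf{Third set.} $\mathcal{L}\circ(\mathcal{I},\dots,\mathcal{I})(E_1,\ldots,E_m;F)$ is the special case $\mathcal{I}_1=\cdots=\mathcal{I}_m=\mathcal{I}$ of the second.

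The only step needing any care is verifying that closed operator ideals and $\mathcal{L}$ fit Definition \ref{ujn7}, that is, that their components are closed and stable under the required compositions. This is immediate from the definitions of (closed) operator ideals and multi-ideals, so no genuine obstacle is expected.
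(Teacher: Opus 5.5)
Your proposal is correct and follows the route the paper intends. The paper gives no separate proof of this corollary; it is stated as an immediate consequence of the second assertion of Proposition \ref{4gim}, using the remark after Definition \ref{ujn7} that a closed operator ideal gives an injective class and the fact that $\mathcal{L}$ is a closed ideal of multilinear operators, with the third set being the special case $\mathcal{I}_1=\cdots=\mathcal{I}_m=\mathcal{I}$.
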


\begin{examples}\rm By $\cal K$ and ${\cal W}$ we denote the closed ideals of compact and weakly compact linear operators, respectively. By the corollary above we know that, for all Banach spaces $E_1, \ldots, E_m$ and $F$, ${\cal L}\circ ({\cal K},\stackrel{(m)}{\ldots} ,{\cal K})(E_1, \ldots, E_m;F)$ and  ${\cal L}\circ ({\cal W},\stackrel{(m)}{\ldots} ,{\cal W})(E_1, \ldots, E_m;F)$ are closed subspaces of $\mathcal{L}(E_1,\dots,E_m;F)$. The classes of multilinear operators ${\cal L}\circ ({\cal K},\stackrel{(m)}{\ldots} ,{\cal K})$ and ${\cal L}\circ ({\cal W},\stackrel{(m)}{\ldots} ,{\cal W})$ were investigated extensively, see, e.g., \cite{AG, AHV, Note, BJ, GG}. For instance: \\
$\bullet$ Several useful characterizations of the multilinear operators belonging to ${\cal L}\circ({\cal W},\ldots ,{\cal W})$ were proved in  \cite{AG}.\\
$\bullet$ In \cite{AHV} it was proved that ${\cal L}\circ ({\cal K},\ldots ,{\cal K})$ coincides with the class of multilinear operators that are weakly continuous on bounded sets, a class of multilinear operators that has been studied for a long time, for recent developments see \cite{Sergio2022, Ewerton, BJW, Sergio2017, Sergio2018, Sergio2025}.
\end{examples}

\section{Final remarks}

\indent\indent Just for record, we state in this section two further results in the line of using closed subspaces of linear operators to produce closed subspaces of multilinear operators that might be of interest. Let the Banach spaces $E_1, \ldots, E_m$ and $F$ be given.

By $E_1 \widehat{\otimes}_\pi \cdots \widehat{\otimes}_\pi E_m$ we denote the completed projective tensor product of $E_1, \ldots, E_m$. For every $A \in {\cal L}(E_1, \ldots, E_m;F)$ there is a unique linear operator $A_L \colon E_1 \widehat{\otimes}_\pi \cdots \widehat{\otimes}_\pi E_m \to F$ such that $A_L(x_1 \otimes \cdots \otimes x_n) = A(x_1, \ldots, x_m)$ for every $(x_1, \ldots, x_m) \in E_1 \times \cdots \times E_m$. Moreover, the correspondence
$$A \in  {\cal L}(E_1, \ldots, E_m;F) \mapsto A_L \in {\cal L}(E_1 \widehat{\otimes}_\pi \cdots \widehat{\otimes}_\pi E_m,F) $$
is an isometric isomorphism \cite{df, ryan}.

\begin{proposition} Let $M$ be a closed subspace of ${\cal L}(E_1 \widehat{\otimes}_\pi \cdots \widehat{\otimes}_\pi E_m,F)$. Then the set of multilinear operators $A \in  {\cal L}(E_1, \ldots, E_m;F)$ so that $A_L \in M$ is a closed subspace of ${\cal L}(E_1, \ldots, E_m;F)$.
\end{proposition}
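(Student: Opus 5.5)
The plan is to transport closedness through the linearization isometry, exactly as was done for the adjoint correspondence in Proposition \ref{fecho-adjunto}. Write $\Phi\colon {\cal L}(E_1, \ldots, E_m;F) \to {\cal L}(E_1 \widehat{\otimes}_\pi \cdots \widehat{\otimes}_\pi E_m,F)$ for the map $\Phi(A) = A_L$. By the facts recalled just before the statement, $\Phi$ is an isometric isomorphism onto ${\cal L}(E_1 \widehat{\otimes}_\pi \cdots \widehat{\otimes}_\pi E_m,F)$. The set in question is precisely $M_L:=\{A : A_L \in M\} = \Phi^{-1}(M)$.

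First I check that $M_L$ is a linear subspace. This follows from the linearity of $\Phi$: $(A+\lambda B)_L$ and $A_L+\lambda B_L$ agree on elementary tensors, so by the uniqueness of the linearization they coincide. Since $M$ is a subspace, so is $\Phi^{-1}(M)$.

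Next I check closedness. Since $\Phi$ is continuous, indeed isometric, the preimage of the closed set $M$ is closed. Sequentially, the argument runs as follows. Take $A_n \in M_L$ with $A_n \to A$ in ${\cal L}(E_1,\ldots,E_m;F)$. Then
$$\|(A_n)_L - A_L\| = \|(A_n-A)_L\| = \|A_n - A\| \to 0.$$
Because $M$ is closed, $A_L\in M$, and therefore $A\in M_L$.

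There is no real obstacle here. The only point needing care is the linearity of $A\mapsto A_L$, which comes from the uniqueness of the linearization together with the density of the algebraic tensor product in the completed projective tensor product. The isometry itself is taken as known from \cite{df, ryan}.
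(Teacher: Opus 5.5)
Your proposal is correct and matches the paper's approach. The paper states this proposition without a written proof, as an immediate consequence of the isometric isomorphism $A \mapsto A_L$, just as Proposition \ref{fecho-adjunto} follows from $A \mapsto A^*$; your identification of the set as $\Phi^{-1}(M)$ for a linear isometry $\Phi$ is exactly that argument.
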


For $j = 1, \ldots, m$, the symbol $\stackrel{[j]}{\ldots}$ means that the $j$-th coordinate has been omitted. The  canonical operator $I_j \colon {\cal L}(E_1, \ldots, E_m;F) \to {\cal L}(E_j, {\cal L}(E_1, \stackrel{[j]}{\ldots}, E_m;F ))$ given by
$$I_j(A)(x_j)(x_1, \stackrel{[j]}{\ldots}, x_m) = A(x_1, \ldots, x_j, \ldots, x_m), $$
is an isometric isomorphism for every $j \in \{1, \ldots, m\}$ \cite{Din, mujica}.

\begin{proposition} Choose $j_1, \ldots, j_k \in \{1, \ldots, m\}$ and let $M_{j_i}$ be a closed linear subspace of ${\cal L}(E_{j_i}, {\cal L}(E_1, \stackrel{[j_i]}{\ldots}, E_m;F ))$, $i = 1, \ldots, k$.  Then the set of multilinear operators $A \in  {\cal L}(E_1, \ldots, E_m;F)$ so that $I_{j_i}(A) \in M_{j_i}$ for every $i = 1,\ldots, k$, is a closed subspace of ${\cal L}(E_1, \ldots, E_m;F)$.
\end{proposition}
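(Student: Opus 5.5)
The plan is to write the set in question as a finite intersection of preimages of closed sets under continuous linear maps. For each $i = 1, \ldots, k$, consider
$$N_i := \{A \in {\cal L}(E_1, \ldots, E_m;F) : I_{j_i}(A) \in M_{j_i}\} = I_{j_i}^{-1}(M_{j_i}).$$
The set we must show is a closed subspace is then exactly $N_1 \cap \cdots \cap N_k$. It therefore suffices to prove that each $N_i$ is a closed subspace of ${\cal L}(E_1, \ldots, E_m;F)$.

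For the subspace part, recall that $I_{j_i}$ is linear; this is immediate from the defining formula $I_j(A)(x_j)(x_1, \stackrel{[j]}{\ldots}, x_m) = A(x_1, \ldots, x_m)$. Since $M_{j_i}$ is a linear subspace, its preimage $N_i$ under a linear map is a linear subspace.

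For closedness, $I_{j_i}$ is an isometric isomorphism, as recalled just before the statement, so in particular it is continuous. Hence $N_i = I_{j_i}^{-1}(M_{j_i})$ is closed as the preimage of a closed set. Alternatively, one can argue with sequences: if $A_n \in N_i$ and $A_n \to A$, then $\|I_{j_i}(A_n) - I_{j_i}(A)\| = \|A_n - A\| \to 0$, so $I_{j_i}(A) \in \overline{M_{j_i}} = M_{j_i}$.

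Finally, a finite intersection of closed subspaces is a closed subspace, which concludes the argument. The same reasoning also proves the preceding proposition about $A_L$, using the isometric isomorphism $A \mapsto A_L$ in place of $I_{j_i}$. I see no real obstacle here. The only point needing care is verifying that $I_j$ is indeed well defined, linear and continuous, and this is supplied by the cited isometric isomorphism from \cite{Din, mujica}.
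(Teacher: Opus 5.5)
Your proof is correct and is essentially the argument the paper intends. The paper states this proposition without proof, as an immediate consequence of each $I_j$ being an isometric isomorphism, just as it does for the adjoint and $A_L$ results. Writing the set as $\bigcap_{i=1}^{k} I_{j_i}^{-1}(M_{j_i})$ and using that each $I_{j_i}$ is linear and continuous is all that is needed.
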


\bigskip
\noindent Geraldo Botelho~~~~~~~~~~~~~~~~~~~~~~~~~~~~~~~~~~~~~~Ariel Mon\c c\~ao\\
Instituto de Matem\'atica e Estat\'istica~~~~~~~~~\,\,\,Departamento de Matemática\\
Universidade Federal de Uberl\^andia~~~~~~~~~~~~~Universidade Federal de Minas Gerais\\
38.400-902 -- Uberl\^andia -- Brazil~~~~~~~~~~~~~~~~~31.270-901 -- Belo Horizonte -- Brazil\\
e-mail: botelho@ufu.br~~~~~~~~~~~~~~~~~~~~~~~~~\,~~~~~e-mail: arieldeom@hotmail.com


\begin{thebibliography}{99}\small

\vspace{-0.5em}

\bibitem{mik} Aires, M., Botelho, G., {\it Spaces of sequences not converging to zero},  Ann. Fenn. Math. {\bf 51} (2026), no. 1, 41-58.

\vspace{-0.5em}

\bibitem{Alikhani} Alikhani, M., {\it On pseudo weakly compact operators of order $p$}, https://doi.org/10.48550/ arXiv.1810.05638, 2018.

\vspace{-0.5em}

\bibitem{Alip} Aliprantis, C. D.,  Burkinshaw, O., {\it Positive Operators}, Springer, Dordrecht, 2006.

\vspace{-0.5em}

\bibitem{Alpay} Alpay, Ş., Ercan, Z., {\it Characterizations of Riesz spaces with b-property}, Positivity {\bf 13}, 21–30 (2009). 

\vspace*{-0.5em}
\bibitem{aqzzouz} B. Aqzzouz, A. Elbour, {\it Some characterizations of almost Dunford–Pettis operators and applications}, Positivity {\bf 15} (2011), 369-380.


\vspace{-0.5em}

\bibitem{Aqzzouz} Aqzzouz, B., H'Michane, J. {\it The class of b-AM-compact operators}, Quaest. Math. {\bf 36} (2013), no. 3, 309-319.




\vspace{-0.5em}



\bibitem{ardvin} Ardakani, H., Miranda, V. C. C., {\it Dunford–Pettis like sets with applications to spaces of
operators}, Bull. Braz. Math. Soc. (N.S.) {\bf 56} (2025), Paper No. 18, 18 pp.

\vspace{-0.5em}

\bibitem{ardvali} Ardakani, H., Vali, F., {\it On almost limited $p$-convergent operators on Banach lattices}, Positivity {\bf 28} (2024), Paper No. 20, 17 pp.

\vspace{-0.5em}

\bibitem{livro} Aron, R. M., Bernal González, L., Pellegrino, D. M., Seoane Sepúlveda, J. B., {\it  Lineability: the search for linearity in mathematics},  CRC Press, Boca Raton, 2016.

\vspace{-0.5em}

\bibitem{Aron2} Aron, R., Dimant, V. {\it Sets of weak sequential continuity for polynomials}, Indag. Math. (N.S.) {\bf 13} (2002), 287-299.

\vspace{-0.5em}

\bibitem{AG} Aron, R. M., Galindo, P., {\it Weakly compact multilinear mappings}, Proc. Edinburgh Math. Soc. (2) {\bf 40} (1997), no. 1, 181-192.

\vspace{-0.5em}

\bibitem{AHV} Aron, R. M., Herv\'es, C., Valdivia, M., {\it Weakly continuous mappings on Banach spaces}, J. Functional Analysis {\bf 52} (1983), 189-204.

\vspace{-0.5em}

\bibitem{Aron} Aron, R. M., Schottenloher, M., {\it Compact holomorphic mappings on Banach spaces and the approximation property}. J. Functional Analysis {\bf 21} (1976), no. 1, 7-30.

\vspace{-0.5em}

\bibitem{Baklouti} Baklouti, H., Hajji, M., Moulahi, R., {\it $b$-AM-Dunford–Pettis Operators on Banach lattices}, Complex Anal. Oper. Theory {\bf 18}, 77 (2024).

\vspace{-0.5em}

\bibitem{Note} Botelho, G., {\it Ideals of polynomials generated by weakly compact operators}, Note Mat. {\bf 25} (2005/06), no. 1, 69-102.
\vspace{-0.5em}

\bibitem{Sergio2022} Botelho, G., F\'avaro, V. V., P\'erez, S. A., {\it Uncomplemented subspaces in operator and polynomial ideals}, Rev. Mat. Complut. {\bf 35} (2022), no. 3, 851-869.

\vspace{-0.5em}

\bibitem{JLPAMS} Botelho, G., Luiz, J. L. P., {\it The positive polynomial Schur property in Banach lattices}, Proc. Amer. Math. Soc. {\bf 149} (2021), 2147-2160.

\vspace{-0.5em}

\bibitem{Vinger} Botelho, G., Miranda, V. C. C., {\it Compact positive multilinear operators on Banach lattices}, Positivity {\bf 30}, 20 (2026).

\vspace{-0.5em}

\bibitem{Ariel}  Botelho, G., Mon\c c\~ao, A., {\it $L$- and $M$-weakly compact multilinear operators and their linear adjoints}, Indag. Math. (N.S.), to appear, https://doi.org/10.1016/j.indag.2026.02.004, 2026.

\vspace{-0.5em}

\bibitem{Botelho} Botelho, G., Pellegrino, D., Teixeira, E., {\it Introduction to Functional Analysis}, Springer, Cham, 2025.

\vspace{-0.5em}

\bibitem{Ewerton2} Botelho, G., Torres, E. R., {\it Hyper-ideals of multilinear operators}, Linear Algebra Appl. {\bf 482} (2015), 1-30.

\vspace{-0.5em}

\bibitem{Ewerton} Botelho, G., Torres, E. R., {\it Strongly factorable multilinear operators on Banach spaces}, Colloq. Math. {\bf 154} (2018), no. 1, 15-30.

\vspace{-0.5em}

\bibitem{Bourbaki} Bourbaki, N., {\it Topological Vector Spaces}, Elements of Mathematics, Springer, 2003.

\vspace{-0.5em}

\bibitem{Braunss} Braunss, H. A., {\it Multi-ideals with special properties}, Blatter Potsdamer Forschungen 1/87, Potsdam, 1987.


\vspace{-0.5em}

\bibitem{BJ} Braunss, H. A., Junek, H., {\it Factorization of injective ideals by interpolation}, Special issue dedicated to John Horv\'ath, J. Math. Anal. Appl. {\bf 297} (2004), no. 2, 740-750.


\vspace{-0.5em}

\bibitem{Buskes3} Bu, Q., Buskes, G., {\it Polynomials on Banach lattices and positive tensor products}, J. Math. Anal. Appl. {\bf 388} {\bf 2}, (2012) 845-862.

\vspace{-0.5em}

\bibitem{BJW} Bu, Q., Ji, D., Wong, N-C., {\it Weak sequential completeness of spaces of homogeneous polynomials}, J. Math. Anal. Appl. {\bf 427} (2015), no. 2, 1119-1130.

\vspace{-0.5em}

\bibitem{Castillo1} Castillo, J. M. F., Garc\'ia, R., Gonzalo, R., {\it Banach spaces in which all multilinear forms are weakly sequentially continuous}, Studia Math. {\bf 136} (1999), 121-145.

\vspace{-0.5em}

\bibitem{Castillo2} Castillo, J. M. F., Garc\'ia, R.,  Gonzalo, R., {\it Stability properties of the class of Banach spaces in which all multilinear forms are weakly sequentially continuous}, Glasgow Math. J. {\bf 44} (2002), 81-92.

\vspace{-0.5em}

\bibitem{sheldon} Dantas, S., Medina, R. {\it On holomorphic functions attaining their weighted norms}, Rev. R. Acad. Cienc. Exactas Fís. Nat., Ser. A Mat., RACSAM {\bf 119} (2025), no. 1, Paper no. 14, 22 p.

\vspace{-0.5em}

\bibitem{df} Defant, A., Floret, K., {\it Tensor Norms and Operator Ideals}, North-Holland, 1993.

\vspace{-0.5em}

\bibitem{Din} Dineen, S., {\it Complex Analysis on Infinite Dimensional Spaces}, Springer, London, 1999.

\vspace{-0.5em}

\bibitem{fg} Floret, K., García, D., {\it On ideals of polynomials and multilinear mappings between Banach spaces}, Arch. Math. (Basel) {\bf 81} (2003), no. 3, 300-308.

\vspace{-0.5em}

\bibitem{Fremlin2} Fremlin, D. H., {\it Tensor products of Archimedean vector lattices}, Amer. J. Math. {\bf 94} (3) (1972) 777-798.

\vspace{-0.5em}

\bibitem{Fremlin1} Fremlin, D. H., {\it Tensor products of Banach lattices}, Math. Ann. {\bf 211} (1974) 87-106.

\vspace{-0.5em}

\bibitem{VP} Galindo, P., Miranda, V., {\it A class of sets in a Banach space coarser than limited sets}, Bull. Braz. Math. Soc. {\bf 53} (2022), 941-955.

\vspace{-0.5em}

\bibitem{Garling} Garling, D. J.H., {\it Inequalities: A Journey into Linear Analysis}, Cambridge University Press, 2007.



\vspace{-0.5em}

\bibitem{GG} Gonz\'alez, M., Guti\'errez, J. M., {\it Factorization of weakly continuous holomorphic mappings}, Studia Math. {\bf 118} (1996), no. 2, 117-133.

\vspace{-0.5em}

\bibitem{Hajek} H\'ajek, P., Johanis, M. {\it Smooth Analysis in Banach Spaces}, Series in Nonlinear Analysis and Applications 19, De Gruyter, Berlin, 2014.

\vspace{-0.5em}

\bibitem{Hajji} Hajji, M., Mahfoudhi, M., {\it LW-compact operators and domination problem}, Positivity {\bf 25}, 1959–1972 (2021).

\vspace{-0.5em}

\bibitem{Jin} Jin Xi, C., Jingge F., {\it $DW$-compact operators on Banach lattices}, Positivity {\bf 29} (2025), no.1, Paper No. 14, 15 pp.


\vspace{-0.5em}

\bibitem{kalton} Kalton, N., {\it Quasi-Banach spaces}, Handbook of the geometry of Banach spaces, Vol. 2, 1099-1130, North-Holland, Amsterdam, 2003.

\vspace*{-0.5em}
\bibitem{khabaoui} Khabaoui, H., H'michane, J., El Fahri, K., {\it A contribution to operators between Banach lattices}, Positivity {\bf 28} (2024), no. 5, Paper No. 65, 11 pp.

\vspace{-0.5em}

\bibitem{kusraeva} Kusraeva, Z. A., {\it Sums of disjointness preserving multilinear operators}, Positivity {\bf 25} (2021) 669-678.


\vspace{-0.5em}

\bibitem{lewis} Lewis, P., {\it Dunford-Pettis sets}, Proc. Amer. Math. Soc. {\bf 129} (2001), 3297-3302.

\vspace{-0.5em}

\bibitem{loane} Loane, J., {\it Polynomials on vector lattices}, Ph.D. Thesis, National University of Ireland, Galway, 2007.

\vspace{-0.5em}

\bibitem{megginson} Megginson, R. E., {\it An Introduction to Banach Space Theory}, Springer, 1998.

\vspace{-0.5em}

\bibitem{Meyer-Peter} Meyer-Nieberg, P., {\it Banach Lattices}, Springer-Verlag, 1991. 

\vspace{-0.5em}

\bibitem{mu} Mujica, J., {\it Linearization of bounded holomorphic mappings on Banach spaces}, Trans. Amer. Math. Soc. {\bf 324} (1991) 867-887.

\vspace*{-0.5em}

\bibitem{mujica} J. Mujica, {\it Complex Analysis in Banach Spaces}, Dover Publications, 2010.


\vspace{-0.5em}

\bibitem{arXiv2026} Ondrej, S., Spurn\'y, J., {\it Operators on injective tensor products of $L_1$-preduals}, arXiv:2604.18157v1, 2026.

\vspace*{-0.5em}
\bibitem{Peralta} Peralta, A. M., Villanueva, I., Wright, J. D. M., Ylinen, K., {\it Topological characterisation of weakly compact operators}, J. Math. Anal. Appl. {\bf 325} no. 2 (2007), 968-974.


\vspace{-0.5em}





\bibitem{Sergio2017} P\'erez, S. A., {\it On the reflexivity of ${\cal P}_w(^nE;F)$}, Arch. Math. (Basel) {\bf 109} (2017), no. 5, 471-475.

\vspace{-0.5em}

\bibitem{Sergio2018} P\'erez, S. A., {\it Complemented subspaces of homogeneous polynomials}, Rev. Mat. Complut. {\bf 31} (2018), no. 1, 153-161.



\vspace{-0.5em}

\bibitem{Sergio2025} P\'erez, S. A., Rinc\'on-Villamizar, M. A., {\it Reflexivity and weak sequential completeness in operator ideals and polynomial ideals}, Bull. Braz. Math. Soc. (N.S.) {\bf 56} (2025), no. 4, Paper No. 58, 15 pp.

\vspace{-0.5em}

\bibitem{Pietsch} Pietsch, A., {\it Operators Ideals}, North-Holland, 1980.


\vspace{-0.5em}

\bibitem{pietsch83} Pietsch, A., {\it Ideals of multilinear functionals (designs of a theory)}, Proceedings of the second international conference on operator algebras, ideals, and their applications in theoretical physics (Leipzig, 1983), 185-199, Teubner-Texte Math., 67, Teubner, Leipzig, 1984.

\vspace{-0.5em}

\bibitem{Ribeiro} Ribeiro, J., Santos, F., Torres, E. R., {\it Coherence and compatibility: a stronger approach}, Linear Multilinear Algebra, {\bf 70} (2022), no. 1, 66–80. 

\vspace{-0.5em}

\bibitem{ry} Ryan, R., {\it Weakly compact holomorphic mappings on Banach spaces}, Pacific J. Math. {\bf 131} (1988) 179-190.

\vspace*{-0.5em}

\bibitem{ryan} Ryan, R. A., {\it Introduction to Tensor Products of Banach Spaces}, Springer, 2002.

\vspace*{-0.5em}
\bibitem{sanchez} S\'anchez, J. A., {\it Operators on Banach lattices} (Spanish), Ph. D. Thesis, Universidad Complutense de Madrid, 1985.

\vspace{-0.5em}

\bibitem{Schaefer} Schaefer, H. H., {\it Banach Lattices and Positive Operators}, Springer Berlin, Heidelberg, 1974.

\vspace{-0.5em}

\bibitem{Schaefer2} Schaefer, H. H., Wolff, M.P., {\it Topological Vector Spaces}, Second Edition, Springer, 1999.

\vspace{-0.5em}

\bibitem{wnuk1994} Wnuk, W., {\it Banach lattices with the weak Dunford-Pettis property}, Atti Sem. Mat. Fis. Univ.
Modena {\bf 42} (1994), no. 1, 227-236.


\end{thebibliography}
\end{document}